\begin{document}

\newtheorem{theorem}{Theorem}[section]
\newtheorem{proposition}[theorem]{Proposition}
\newtheorem{lemma}[theorem]{Lemma}
\newtheorem{corollary}[theorem]{Corollary}
\newtheorem{conjecture}[theorem]{Conjecture}
\newtheorem{question}[theorem]{Question}
\newtheorem{problem}[theorem]{Problem}

\theoremstyle{definition}
\newtheorem{definition}[theorem]{Definition}
\newtheorem{example}[theorem]{Example}

\theoremstyle{remark}
\newtheorem{remark}[theorem]{Remark}

\def\theenumi{\roman{enumi}}

\numberwithin{equation}{section}

\renewcommand{\Re}{\operatorname{Re}}
\renewcommand{\Im}{\operatorname{Im}}

\def \R {{\mathbb R}}
\def \HH {{\mathbb H}}
\def \C {{\mathbb C}}
\def \Z {{\mathbb Z}}
\def \Q {{\mathbb Q}}
\def \TT {{\mathbb T}}
\newcommand{\T}{\mathbb T}
\def \Dc {{\mathcal D}}

\newcommand{\tr}[1] {\hbox{tr}\left( #1\right)}

\newcommand{\area}{\operatorname{area}}

\newcommand{\Norm}{\mathcal N}
\newcommand{\simgeq}{\gtrsim}%
\newcommand{\simleq}{\lesssim}

\newcommand{\length}{\operatorname{length}}

\newcommand{\curve}{\mathcal C} 
\newcommand{\vE}{\mathcal E} 
\newcommand{\Ec}{\mathcal {E}} 
\newcommand{\Sc}{\mathcal{S}} 

\newcommand{\dist}{\operatorname{dist}}
\newcommand{\supp}{\operatorname{supp}}
\newcommand{\spec}{\operatorname{spec}}
\newcommand{\diam}{\operatorname{diam}}

\newcommand{\Ccap}{\operatorname{Cap}}
\newcommand{\E}{\mathbb E}

\newcommand{\sumstar}{\sideset{}{^\ast}\sum}

\newcommand {\Zc} {\mathcal{Z}} 
\newcommand{\ninumber}{\Zc}

\newcommand{\zeigen}{E} 
\newcommand{\eigen}{m}

\newcommand{\ave}[1]{\left\langle#1\right\rangle} 

\newcommand{\Var}{\operatorname{Var}}
\newcommand{\Prob}{\operatorname{Prob}}

\newcommand{\var}{\operatorname{Var}}
\newcommand{\Cov}{{\rm{Cov}}}
\newcommand{\meas}{\operatorname{meas}}

\newcommand{\leg}[2]{\left( \frac{#1}{#2} \right)}  

\renewcommand{\^}{\widehat}

\newcommand {\Rc} {\mathcal{R}}

\title[Nodal intersections for random waves on the 3-dim torus]
{Nodal intersections for random waves on the 3-dimensional torus}
\author{Z. Rudnick, I. Wigman and N. Yesha}

\address{
School of Mathematical Sciences, Tel Aviv University, Tel Aviv,
Israel} \email{rudnick@post.tau.ac.il}

\address{Department of Mathematics, King's College London, UK}
\email{igor.wigman@kcl.ac.uk}

\address{
School of Mathematical Sciences, Tel Aviv University, Tel Aviv,
Israel} \email{nadavye1@post.tau.ac.il}

\date{\today}

\thanks{The research leading to these results has received funding from the
European Research Council under the European Union's Seventh
Framework Programme (FP7/2007-2013) / ERC grant agreements
n$^{\text{o}}$ 320755 (Z.R.) and  n$^{\text{o}}$ 335141 (I.W.), and from  the Friends of the Institute for Advanced Study (Z.R.).
}

\begin{abstract}
We investigate the number of nodal intersections of random Gaussian
Laplace eigenfunctions on the standard three-dimensional flat torus
 with a fixed smooth reference curve, which has nowhere vanishing
 curvature. The expected intersection number is universally proportional to the
length of the reference curve, times the wavenumber, independent of
the geometry. Our main result gives a bound for the variance, if
either the torsion of the curve is nowhere zero or if the curve is
planar.
\end{abstract}

\maketitle

\section{Introduction}

\subsection{Toral nodal intersections}

Let $\T^d=\R^d/\Z^d$ be the standard flat $d$-dimensional torus and
$\curve \subset \T^d$ a fixed reference curve\footnote{By a curve we always mean a parameterized, compact, immersed curve. }. Given a real-valued
eigenfunction $F(x)$  of the Laplacian
\begin{equation*}
-\Delta F=4\pi^2\zeigen  \cdot F \;,
\end{equation*}
we wish to study the number of intersections
\begin{equation*}
\Zc(F):=\#\{x\in \T^d: F(x)=0\}\cap \curve
\end{equation*}
of the nodal set of $F$ with the reference curve $\curve$ as a
function of the corresponding eigenvalue.

In dimension $d=2$, if the curve is smooth and has nowhere vanishing
curvature, then deterministically for {\em every} eigenfunction, the
number of nodal intersections satisfies \cite{BRGAFA, BRNI}
\begin{equation*}
\frac{\zeigen^{1/2}}{(\log \zeigen)^{5/2}} \ll \Zc(F) \ll
\zeigen^{1/2};
\end{equation*}
here and everywhere $f\ll g $ (equivalently $ f=O\left(g\right) $) means that
there exists a constant $ C>0 $ such that $ |f|\le C|g| $.
For the upper bound we require that $\curve$ is real-analytic, in
particular it is shown that $\curve$ is not contained in the nodal
set for $\zeigen$ sufficiently large. One can improve the lower
bound to $\Zc(F)\gg \zeigen^{1/2}$ conditionally on a certain
number-theoretic conjecture \cite{BRNI}.

In this note we deal with dimension $d=3$. If we consider the
intersection of the nodal set with a fixed real-analytic reference
{\em surface} $\Sigma\subset \T^3$ with nowhere-zero Gauss-Kronecker
curvature, then for $\zeigen$ sufficiently large,  $\Sigma$ is not
contained in the nodal set \cite{BRINV}, the length of the
intersection of $\Sigma$ with the nodal set is $\ll \sqrt{\zeigen}$
\cite{BRGAFA}, and the nodal intersection is non-empty
\cite{BRGAFA}. However, for the intersection of the nodal set with a
fixed reference {\em curve} (where we expect only finitely many
points), the following examples indicate that one cannot expect to
have any deterministic bounds on the number of nodal intersections.

\begin{example}
Take the eigenfunctions of the form $$F_k(x_1,x_2,x_3)=\sin(2\pi
kx_1);$$ their nodal surfaces are the planes $\left\{x\in
\T^3:x_1\in \frac 1 {2k} \Z \right\}$ and any curve lying on the
plane $x_1=1/2\pi$ does not intersect these, whereas a curve on the
plane $x_1=0$ is lying inside all of these nodal surfaces. We
observe that the curves in this example are {\em planar}.
\end{example}

\begin{example}
Let $F_{0}\left(x,y\right)$ be an eigenfunction on the
two-dimensional torus with eigenvalue $4\pi^2\zeigen_{0}^{2}$, and
$S_{0}$ a curved segment contained in the nodal set, admitting an
arc-length parameterization $\gamma_{0}: [0,L ]\to S_{0}$, with
curvature $\kappa_0(t)=|\gamma_0''(t)|>0$. For $n\ge 0$ let
$F_{n}\left(x,y,z\right)=F_{0}\left(x,y\right)\cos\left(2\pi
nz\right)$, an eigenfunction on $\T^{3}$ with eigenvalue
$4\pi^2(\zeigen_0^2+n^{2})$. Let $\curve$ be the parametric curve
$\gamma\left(t\right)=\left(\gamma_{0}(\frac t{\sqrt{2}}),\frac
t{\sqrt{2}}\right)$. A computation shows that  the curvature is
$\kappa(t) = \frac 12 \kappa_0(\frac t{\sqrt{2}})>0$, and that the
torsion is $\tau(t) = \pm\frac 1{2 }\kappa_0(\frac t{\sqrt{2}})\neq
0$,
so that $\curve$ is non-planar.  Clearly $\curve $ is contained in
the nodal set of $F_{n}$ for all $n$. Thus even in the non-planar
case, we can have the reference curve $\curve$ contained in the
nodal set for arbitrarily large $\zeigen$.
\end{example}

\begin{question} Does there exist a non-planar curve
$\curve$ with no nodal intersections, $\zeigen$ arbitrarily large?
\end{question}

\subsection{Arithmetic random waves}

As there is no deterministic bound on $\Zc(F)$ in dimension $3$, we
investigate what happens for ``typical" eigenfunctions. Let
$\vE(\zeigen)$ be the set of lattice points lying on a sphere
\begin{equation*}
 \vE(\zeigen) = \{\vec x\in \Z^3: |\vec x|^2=\zeigen\},
\end{equation*}
and let
\begin{equation*}
N= N_\zeigen:=\#\vE(\zeigen).
\end{equation*}

The Laplace spectrum on $\T^3$ is of high multiplicities, with the
dimension of an eigenspace corresponding to an eigenvalue
$4\pi^2\zeigen$ being of size $N_\zeigen\approx \zeigen^{1/2 \pm
o(1)}$. The eigenfunctions corresponding to the eigenvalue
$4\pi^2\zeigen$ are of the form
\begin{equation}
\label{eq:F arith wave def} F(x)=F_{\zeigen}\left( x\right) =
\dfrac{1}{\sqrt{N_\zeigen}} \sum_{\mu\in
\mathcal{E}(E)}{a_{\mu}e^{2\pi i \left <\mu , x\right >}}.
\end{equation}
We consider random Gaussian eigenfunctions (``arithmetic random
waves" \cite{KKW}) by taking the coefficients $ a_\mu $ to be
standard complex Gaussian random variables, independent save for the
relations $ a_{-\mu} = \overline{a_{\mu}}$, making $F$ real-valued.

\subsection{Statement of the main results}

\begin{theorem}\label{thmA}
Let $\curve \subset \T^3$ be a smooth curve of length $L$, with
nowhere zero curvature. Assume further that one of the following
holds:
\begin{enumerate}
\item $\curve$ has nowhere-vanishing torsion;
\item $\curve$ is planar (so that the torsion vanishes identically).
\end{enumerate}
Then for all $\epsilon>0$, as $\zeigen \to \infty$ along integers
$\zeigen\not\equiv 0,4,7\bmod 8$, the number of nodal intersections
satisfies
\begin{equation*}
\lim_{\substack{E\to \infty\\ E\not\equiv 0,4,7 \bmod 8}}
\Prob\left(\left|\frac{\Zc(F)}{\sqrt{\zeigen}}-\frac{2}{\sqrt{3}}L
\right|>\epsilon\right) \to 0.
\end{equation*}
\end{theorem}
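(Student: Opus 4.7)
The plan is to use Chebyshev's inequality, reducing the theorem to the first moment asymptotic $\E[\Zc(F)] \sim (2L/\sqrt{3})\sqrt{\zeigen}$ and the variance bound $\Var(\Zc(F)) = o(\zeigen)$, both as $\zeigen\to\infty$ avoiding $0,4,7\bmod 8$. The residue restriction ensures $N_\zeigen \to \infty$ and, crucially, that the projected lattice points $\vE(\zeigen)/\sqrt{\zeigen}$ equidistribute on $S^2$ (Duke, Golubeva--Fomenko, Iwaniec). Fix an arc-length parametrization $\gamma:[0,L] \to \T^3$ of $\curve$, and let $f(t) := F(\gamma(t))$, a smooth centered Gaussian process on $[0,L]$ with $\E[f(t)^2] = 1$. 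By the antipodal symmetry $\mu \leftrightarrow -\mu$ one has $\E[f(t)f'(t)] = 0$, so the marginals of $(f(t), f'(t))$ are independent Gaussians. The Kac--Rice formula therefore gives
\begin{equation*}
\E[\Zc(F)] = \frac{1}{\pi}\int_0^L \sqrt{\E[f'(t)^2]}\, dt, \qquad \E[f'(t)^2] = \frac{4\pi^2}{N_\zeigen}\sum_{\mu\in\vE(\zeigen)} \langle \mu, \gamma'(t)\rangle^2,
\end{equation*}
and equidistribution yields $\E[f'(t)^2] = (4\pi^2 \zeigen/3)(1 + o(1))$ uniformly in $t$, producing the claimed expectation.

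\textbf{Variance via Kac--Rice and Wiener chaos.} For the second moment, Kac--Rice gives
\begin{equation*}
\E[\Zc(F)(\Zc(F)-1)] = \int_0^L\!\int_0^L \phi_{(f(s),f(t))}(0,0)\, \E\bigl[|f'(s)f'(t)| \bigm| f(s)=f(t)=0\bigr]\, ds\, dt.
\end{equation*}
I would bound the variance by expanding $\Zc(F)$ in the Wiener chaos of the Gaussian family $\{a_\mu\}$; since $F$ and $-F$ have the same distribution, only even chaoses contribute, and $\Var \Zc(F) = \sum_{q\geq 2,\, q \text{ even}} \|\Zc(F)[q]\|_2^2$. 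The dominant term is expected to be the second chaos, whose $L^2$-norm reduces to a sum of the type
\begin{equation*}
\frac{1}{N_\zeigen^2}\sum_{\mu,\mu' \in \vE(\zeigen)} A(\mu,\mu';\gamma'(s),\gamma'(t)) \int_0^L\!\int_0^L e^{2\pi i \langle \mu+\mu', \gamma(s)-\gamma(t)\rangle} h(s,t)\, ds\, dt,
\end{equation*}
with $A$ a bounded trigonometric expression in the normalized inner products $\langle \mu, \gamma'(\cdot)\rangle/\sqrt{\zeigen}$, etc. The diagonal contribution $\mu+\mu' = 0$ reproduces $\E[\Zc(F)]^2$ up to lower-order terms and cancels; the off-diagonal sum must be shown to be $o(\zeigen)$.

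\textbf{Role of the hypotheses and main obstacle.} The off-diagonal estimate is the heart of the proof, and this is where nowhere-vanishing curvature and the torsion/planarity dichotomy are used. Van der Corput applied in $s$ and $t$ separately, using $\gamma''(s) \neq 0$, makes the oscillatory integral decay like $|\langle \mu+\mu',\gamma'(s)\rangle|^{-1/2}$, which is effective except when $\mu+\mu'$ lies nearly in the normal plane of $\curve$ along a large set of parameters. Nonvanishing torsion forces the normal plane of $\curve$ to rotate, so the exceptional directions on $S^2$ form a small set, and iterated integration by parts (invoking $\gamma'''$) recovers uniform decay; in the planar case, $\gamma$ lies in a fixed affine plane, so the phase depends only on the two in-plane components of $\mu+\mu'$, the sum factors, and the problem reduces to a two-dimensional estimate reminiscent of the $\T^2$ analysis of \cite{KKW}. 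Examples 1.2--1.3 show that neither hypothesis can be dropped outright. The main obstacle is quantifying the count of lattice pairs whose sum is aligned with the exceptional directions of $\curve$: this demands an arithmetic input on $\vE(\zeigen)$ in spherical caps, coupled with the geometric rigidity that curvature and torsion provide. Once the second chaos is bounded by $o(\zeigen)$, higher chaoses are absorbed by a Gaussian hypercontractivity/truncation argument on the $L^2$ tail, and Chebyshev concludes.
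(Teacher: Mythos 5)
Your skeleton (Chebyshev's inequality plus an expectation computation and a variance bound $o(\zeigen)$, Van der Corput with the Frenet frame, arithmetic cap counts) matches the paper's, but your variance argument has a genuine gap precisely where the paper has to work hardest. You write down the Kac--Rice formula for $\E[\Zc(\Zc-1)]$ over all of $[0,L]^2$ as if it were automatically valid. It is not: even the weakest known sufficient condition (Theorem \ref{thm:Kac-Rice}, from Aza\"{i}s--Wschebor) requires $r(t_1,t_2)\ne\pm1$ for all $t_1\ne t_2$, and the paper points out that one can construct curves for which this fails and the second-factorial-moment Kac--Rice formula is simply false -- this is the central technical obstacle of the whole problem. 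The paper's Section 2 circumvents it with an \emph{approximate} Kac--Rice formula: partition $[0,L]$ into intervals of wavelength scale $c_0/\sqrt{\zeigen}$, prove non-degeneracy on diagonal cubes (Lemma \ref{lem:diag_cubes}), apply Kac--Rice pairwise on non-singular cubes, and control the singular cubes (where $|r|>1/2$) by Cauchy--Schwarz together with a Chebyshev--Markov count of their number (Lemma \ref{lem:3.7}), yielding $\Var(\Zc/\sqrt{\zeigen})=O(\Rc_2(\zeigen))$ (Proposition \ref{prop:approx Kac-Rice}). Your proposal silently assumes this obstacle away. Relatedly, your plan to handle chaoses of order $q\ge 4$ ``by a Gaussian hypercontractivity/truncation argument'' is not an argument: hypercontractivity compares $L^p$ with $L^2$ norms \emph{within} a fixed chaos and gives no decay of $\sum_{q\ge 4}\|\Zc[q]\|_2^2$; in the literature where chaos expansions of nodal functionals are actually carried out, dominating the tail of the series requires bounding exactly the moments of $r,r_1,r_2,r_{12}$ that constitute $\Rc_2(\zeigen)$, so this route is no softer than the approximate Kac--Rice one -- and it additionally needs the $L^2$-expandability of $\Zc$, which itself requires the non-degeneracy analysis you skipped.

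Two further points. In the planar case, your claimed reduction ``to a two-dimensional estimate reminiscent of the $\T^2$ analysis'' does not work as stated: the lattice points remain on the sphere $\sqrt{\zeigen}\,S^2\subset\Z^3$ and do not project to a planar lattice problem; what the paper actually does (Proposition \ref{prop:R2<<1/E^1/4-o(1) planar}) is stay in three dimensions, show via a geometric lemma (Lemma \ref{lem:Geom_Lemma}) that the pairs $\mu\ne\mu'$ whose direction $\xi(\mu,\mu')$ is within $\sqrt{2\epsilon}$ of the constant binormal $\pm B$ lie in two spherical caps of radius $\ll\sqrt{\epsilon \zeigen}$, count those by the cap bound $O(\zeigen^{\eta}(1+r))$, treat the remaining pairs by Van der Corput with first/second derivatives, and optimize $\epsilon=\zeigen^{-1/2}$; the arithmetic input is Siegel's lower bound $N\gg \zeigen^{1/2-\epsilon}$ fed through the Riesz-energy estimate (Proposition \ref{prop:bd on Riesz2}), i.e.\ ternary, not binary, quadratic forms. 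Finally, a minor simplification you missed: the expectation is \emph{exactly} $\frac{2}{\sqrt{3}}L\sqrt{\zeigen}$ for every admissible $\zeigen$, by the identity $\frac1N\sum_{\mu\in\vE}\langle\mu,v\rangle^2=\frac{\zeigen}{3}\|v\|^2$ (a symmetry fact, \cite[Lemma 2.3]{RW}); no equidistribution input \`a la Duke is needed there, and indeed the only place the congruence condition $\zeigen\not\equiv 0,4,7\bmod 8$ enters the proof is through Siegel's bound in the Riesz-energy estimate, not through equidistribution.
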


Note that the condition $\zeigen\not\equiv 0,4,7\bmod 8$ is natural,
as otherwise $\zeigen=4^a \zeigen'$ with $\zeigen'\not\equiv 7 \bmod
8$ (if $\zeigen'\equiv 7\bmod 8$ then $\zeigen$ is not a sum of
three squares and hence does not yield a Laplace eigenvalue); then
an eigenfunction of eigenvalue $4\pi^2\zeigen$ is necessarily of the
form $F(x)=H(2^a x)$ with $H$ an eigenfunction of eigenvalue
$\zeigen'$. Hence any question on the nodal set of $F$ reduces to the
corresponding question on the nodal set of eigenfunctions with
eigenvalue $E'=E/4^a$ (which may be trivial, e.g. if $E=4^a$).

To prove Theorem~\ref{thmA} we compute the expected value of $\Zc$
with respect to the Gaussian measure defined on the eigenspace as
above to be
\begin{equation*}
\E(\Zc) = \frac{2}{\sqrt{3}}L\sqrt{\zeigen},
\end{equation*}
and give an upper bound for the variance:

\begin{theorem}\label{thmvar}
Let $\curve \subset \T^3$ be a smooth curve, with nowhere-zero
curvature. Assume also that either $ \curve $ has nowhere-vanishing
torsion, or $ \curve $ is planar. Then for $\zeigen\not\equiv
0,4,7\bmod 8$,
\begin{equation}
\label{eq:Var<<eig^-delta} \Var
\left(\frac{\Zc}{\sqrt{\zeigen}}\right) \ll \frac
1{\zeigen^{\delta}}
\end{equation}
for all $\delta<1/3$ in case $\curve$ has nowhere vanishing torsion, and
all $\delta<1/4$ in case $\curve$ is planar.
\end{theorem}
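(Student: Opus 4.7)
The approach is via the Kac-Rice formula applied to $f(t) := F(\gamma(t))$, where $\gamma : [0,L] \to \curve$ is an arc-length parametrization, so that $\Zc$ counts the zeros of $f$ on $[0,L]$. Every covariance entry entering Kac-Rice is a smooth function of
\begin{equation*}
r(x) := \frac{1}{N_\zeigen} \sum_{\mu \in \vE(\zeigen)} e^{2\pi i \langle \mu, x \rangle}
\end{equation*}
and its derivatives, evaluated on chord vectors $\gamma(t_1) - \gamma(t_2)$. Using the cubic symmetry of $\vE(\zeigen)$, a direct computation yields $\E[f(t)^2] = 1$, $\E[f(t)f'(t)] = 0$, and $\E[f'(t)^2] = 4\pi^2 \zeigen/3$ identically in $t$, so that the one-point Kac-Rice density is constant and produces $\E[\Zc] = \frac{2L}{\sqrt{3}}\sqrt{\zeigen}$. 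The variance then equals $\int_0^L \!\! \int_0^L (K_2(t_1,t_2) - K_1(t_1) K_1(t_2))\, dt_1\, dt_2 + \E[\Zc]$, which is the main object to be bounded.

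To extract the size of this integral I would pass to the Wiener chaos expansion $\Zc - \E[\Zc] = \sum_{q \ge 1} \Zc[2q]$, the odd chaos vanishing by the symmetry $F \mapsto -F$. The variance of the quadratic component, which I expect to drive the bound, reduces to a quantity of shape
\begin{equation*}
\Var(\Zc[2]) \ \sim \ \frac{\zeigen}{N_\zeigen^2} \sum_{\mu, \nu \in \vE(\zeigen)} \bigl| I(\mu - \nu) \bigr|^2, \qquad I(\xi) := \int_0^L e^{2\pi i \langle \xi, \gamma(t) \rangle}\, g(t)\, dt,
\end{equation*}
for a smooth weight $g$ built from $\gamma'$. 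The higher chaos $\Zc[2q]$, $q \geq 2$, yield analogous combinatorial sums of products of such integrals that must be controlled as tails.

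The key analytic input is a stationary-phase estimate for $I(\xi)$. Nowhere-vanishing curvature alone gives $|I(\xi)| \ll |\xi|^{-1/2}$ from a single non-degenerate stationary point and handles the planar case. If in addition the torsion of $\gamma$ is nowhere zero, then the directions $\xi$ at which the phase second derivative also vanishes at a critical point form a measure-zero subset of $S^2$, and one can exploit this to save an extra $|\xi|^{-1/2}$ on average, which is the source of the better exponent $\delta < 1/3$ as opposed to $\delta < 1/4$. Combined with $|\mu - \nu| \asymp \sqrt{\zeigen}$ for typical pairs, the equidistribution of $\vE(\zeigen)/\sqrt{\zeigen}$ on $S^2$, and the lower bound on $N_\zeigen$ valid for $\zeigen \not\equiv 0,4,7 \pmod 8$ with $N_\zeigen \to \infty$, one arrives at the claimed bound (\ref{eq:Var<<eig^-delta}) after the normalization by $\zeigen$.

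\textbf{Main obstacle.} The principal technical difficulty is the diagonal $t_1 = t_2$ of the Kac-Rice double integral, where the covariance matrix of $(f(t_1), f(t_2))$ degenerates and $K_2(t_1, t_2)$ is a priori singular. I would handle this by excising a neighborhood $|t_1 - t_2| \le \zeigen^{-1/2 + \eta}$, bounding its contribution via direct Kac-Rice upper bounds on clustering of close zeros, and applying the chaos analysis only on the complement. A secondary obstacle is to show that the tail $\sum_{q \ge 3} \Var(\Zc[2q])$ does not worsen the final bound; this should reduce to $L^{2q}$-estimates for $r \circ \gamma$ via the same stationary-phase mechanism, but with the additional combinatorics of pruning near-degenerate configurations of three or more lattice points.
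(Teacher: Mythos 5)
Your high-level frame (restrict $F$ to $\gamma$, Kac--Rice for the expectation, reduce the variance to sums of oscillatory integrals $I(\mu-\nu)$ over pairs of lattice points) matches the paper, but the key analytic input is wrong, and in a way that reverses the two cases of the theorem. For a \emph{planar} curve the claimed uniform bound $|I(\xi)|\ll|\xi|^{-1/2}$ from curvature alone is false: if $\xi$ is parallel to the (constant) binormal $B$, then $\phi_\xi'(t)=\langle\xi,T(t)\rangle\equiv 0$ and $\phi_\xi''(t)=\kappa(t)\langle\xi,N(t)\rangle\equiv 0$, so the phase is constant and $I(\lambda,\xi)=L$ with no decay at all; pairs $\mu,\nu$ with $\mu-\nu$ nearly parallel to $B$ genuinely occur and cannot be ignored. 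The paper handles exactly this: it splits the pairs according to whether $|\langle\xi(\mu,\nu),B\rangle|^2>1-2\epsilon$, shows by a geometric lemma that the bad pairs lie in two spherical caps of radius $\ll\sqrt{\epsilon E}$ (hence $O(E^\eta(1+\sqrt{\epsilon E}))$ per $\mu$ by the cap-count bound), and balances the good-set contribution $\epsilon^{-1/2}E^{-1/2+\eta}$ (from Van der Corput with first/second derivatives $\geq\sqrt{\epsilon}$) against the cap contribution $\sqrt{\epsilon}E^{\eta}$ at $\epsilon=E^{-1/2}$; it is this bad set, not the stationary-phase exponent, that caps the planar case at $\delta<1/4$ --- if your uniform $-1/2$ bound were true it would give $\delta$ up to $1/2$, contradicting the stated theorem. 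Symmetrically, in the nowhere-vanishing-torsion case the mechanism is not an averaged gain over $-1/2$ but a \emph{weaker, uniform} pointwise bound $|I(\lambda,\xi)|\ll\lambda^{-1/3}$: when $\langle\xi,T\rangle$ and $\langle\xi,N\rangle$ are both small, $|\langle\xi,B\rangle|\geq 1/\sqrt{3}$ and $|\phi_\xi'''|\geq \kappa|\tau|\,|\langle\xi,B\rangle|-O(\sqrt{c})$ is bounded below, so the third-derivative Van der Corput applies; squaring and summing yields $E^{-1/3}$ times the normalized Riesz $2/3$-energy, which is $\ll E^{\eta}$ by the cap-count plus Siegel's bound $N\gg E^{1/2-\epsilon}$ (valid exactly for $E\not\equiv 0,4,7\bmod 8$), whence $\delta<1/3$. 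Your attribution of exponents to hypotheses is backwards, and your bookkeeping does not reproduce either $1/3$ or $1/4$.

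There is a second gap in how you treat degeneracy. Excising only a diagonal strip $|t_1-t_2|\le E^{-1/2+\eta}$ does not address the actual obstruction: the joint law of $(f(t_1),f(t_2))$ can degenerate, i.e.\ $r(t_1,t_2)=\pm1$, at \emph{far-apart} pairs $t_1\neq t_2$ (the paper notes one can construct curves where the global second-moment Kac--Rice formula fails outright), so the two-point intensity is simply unavailable on parts of the off-diagonal region. The paper's substitute is an \emph{approximate} Kac--Rice formula: partition $[0,L]$ into $\asymp\sqrt{E}$ intervals of wavelength scale, prove $\Cov(\Zc_i,\Zc_j)=O(1)$ uniformly via Kac--Rice on single short intervals plus Cauchy--Schwarz, call a product cube singular if $|r|>1/2$ on it, bound the number of singular cubes by $E\int r^2$ via Chebyshev, and on nonsingular cubes use the Taylor expansion $K_2-K_1K_1=E\cdot O\bigl(r^2+(r_1/\sqrt{E})^2+(r_2/\sqrt{E})^2+(r_{12}/E)^2\bigr)$, reducing everything to second moments of $r$ and its first derivatives. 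Your chaos-expansion route could in principle bypass pointwise Kac--Rice, but as written it rests on two unproved assertions --- that the second chaos drives the variance, and that the tail $q\geq 2$ is controlled by the same oscillatory estimates --- and in related arithmetic random wave problems the second chaos is known to exhibit cancellations, so neither can be taken for granted.
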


\begin{remark}
\label{rem:variance analytic} If $\curve$ is real-analytic and
non-planar, so that the torsion is not identically zero, but may
vanish at finitely many points, the result
\eqref{eq:Var<<eig^-delta} above is valid with some
$\delta=\delta_\curve>0$, see \S~\ref{sec:thmvar proof}.
\end{remark}

\subsection{Outline of the paper and the key ideas}

We prove the approximate Kac-Rice formula (briefly explained in
\S~\ref{sec:Kac-Rice short}) in \S~\ref{sec:Kac-Rice},
followed by a study of certain oscillatory integrals on the curve in
\S~\ref{sec:oscillary integrals}. The arithmetic heart of the paper
is \S~\ref{sec:second_moment}, where we bound the second moment of
the covariance function and its derivatives, following some
background on the arithmetic of sums of three squares in
\S~\ref{sec:sums of 3 squares}, expanded on in Appendix~\ref{sec:Appendix}.

\vspace{3mm}

A similar result to Theorem \ref{thmvar}
was proved in the two-dimensional case for $\curve
\subset \T^2$ having nowhere-zero curvature \cite{RW2014}. In that
case the authors found that the precise asymptotic behaviour of the
nodal intersections variance is {\em non-universal}, namely
dependent on both the angular distribution of the lattice points $
\mathcal{E}\left (E\right ) $ and the geometry of $\curve$. In the
$3$-dimensional case we were only able to obtain an upper bound
\eqref{eq:Var<<eig^-delta} on the variance, which implies the
``almost-all" statement of Theorem~\ref{thmA}. These two cases
differ both in terms of analytic and arithmetic ingredients; the
arithmetic of ternary quadratic forms differs significantly from
that of binary quadratic forms.

\subsection{An approximate Kac-Rice formula}

\label{sec:Kac-Rice short}

We end this Introduction with a discussion of a key step in our work, the approximate Kac-Rice formula.

By restricting the arithmetic random waves \eqref{eq:F arith wave
def} along $\curve$, the problem of nodal intersections count is
reduced to evaluating the number of zeros (zero crossings) of a
random (nonstationary) Gaussian process. The Kac-Rice formula is a
standard tool or meta-theorem for expressing the (factorial) moments
of the zero crossings number of a Gaussian process precisely in
terms of certain explicit integrals with integrands depending on the
given process. 
It is then easy to evaluate the expected number of zeros
precisely and explicitly via an evaluation of a standard Gaussian
expectation.

For the variance, or the second factorial moment, the validity of
the Kac-Rice formula is a very subtle question with a variety of
sufficient conditions known in the literature (e.g. ~\cite{CL,AW}).
While the classical treatise ~\cite{CL} requires the non-degeneracy
of the (Gaussian) distribution of the values of the given process at
two points together with their derivatives, only the non-degeneracy
of the values distribution (at two points) is required for the more
modern treatment ~\cite{AW}, which, to the best knowledge of the
authors, is the weakest known sufficient condition for the validity
of Kac-Rice. Unfortunately, even this weaker condition may fail in
our case.

In order to treat this situation (\S \ref{sec:Kac-Rice}) we
divide the interval into many small subintervals of length
commensurable to the wavelength $\frac{1}{\sqrt{\zeigen}}$
and decompose the total variance as a sum over pairs of subintervals
of zero number covariances \eqref{eq:var=sum cov ij}. We were able
to prove the validity of Kac-Rice for {\em most} of the pairs of
subintervals that includes all the diagonal pairs (i.e. the variance
of nodal intersections along sufficiently small curves), and bound
the contribution of the other pairs via a simple Cauchy-Schwartz
argument. The price is that along the way we incur an error term,
hence yielding an {\em approximate} Kac-Rice (Proposition
\ref{prop:approx Kac-Rice}) reducing a variance computation to an
estimate  for some moments of the covariance function and its
derivatives; such a strategy was also used in \cite{RW2014, CMW}.
The remaining part of this paper is concerned with proving such an
estimate on the second moment of the covariance function and a
couple of its derivatives.

We finally record that we may omit a certain technical assumption
made in our previous work on the two-dimensional case \cite{RW2014}
by using the more general form of the Kac-Rice formula as in
~\cite{AW} (vs. ~\cite{CL}). Given an integer $m$ expressible as a
sum of $2$ squares we defined the probability measure
$$\tau_{m}=\sum\limits_{\|\lambda\|^{2}=m}\delta_{\lambda/\sqrt{m}}$$ on the unit circle $\mathcal{S}^{1}\subseteq \R^{2}$
supported on all the lattice points $\lambda\in\Z^{2}$ lying on the
centered radius-$\sqrt{m}$ circle in $\R^{2}$ projected to the unit
circle. In \cite[Theorem 1.2]{RW2014} we evaluated the
variance of the number of nodal intersections under the assumption
that the Fourier coefficient $\widehat{\tau_{m}}(4)$ is bounded away
from $\pm 1$ (i.e. $\tau_{m}$ bounded away from the singular
measures $\frac{1}{4}(\delta_{\pm 1}+\delta_{\pm i})$ and its
$\pi/4$-tilted version). Using ~\cite{AW} makes that assumption no
longer necessary.

\section{An approximate Kac-Rice formula}
\label{sec:Kac-Rice}

\subsection{The Kac-Rice premise}

Let $d\geq 3$, and $ \curve\subseteq \T^d $ be a smooth curve. Let $
F $ be the arithmetic random wave
\begin{equation*}
F(x)= \dfrac{1}{\sqrt{N_\zeigen}} \sum_{\mu\in
\mathcal{E}(E)}{a_{\mu}e^{2\pi i \left <\mu , x\right >}}
\end{equation*}
(with the obvious generalization of all the previous notation to
higher dimensions $ d\ge 3 $). We wish to study the number $
\mathcal{Z} (F) $ of nodal intersections of $F$ with $\curve$ by
restricting $ F $ to $ \curve $ as follows.

Let $\gamma:[0,L]\rightarrow \R$ be a unit speed parameterization of
$ \curve $. We restrict $ F $ to $ \curve $ by defining the random
Gaussian process
\begin{equation}
\label{eq:f(t)=F(gamma(t))} f(t)=F \left (\gamma (t)\right )
\end{equation}
on $ [0,L] $. It is then obvious that the nodal intersections number
$\mathcal{Z} (F)$ is equal to the number of the zeros of $ f $. The
Kac-Rice formula (see e.g. \cite{CL}, \cite{AW}) is a standard tool
(meta-theorem) for evaluating the expected number and higher
(factorial) moments of zeros of a ``generic" process: let
$X:I\rightarrow \R$ be a (a.s. $C^{1}$-smooth, say) random Gaussian
process on an interval $I\subseteq \R$, and $\Zc=\Zc_{I;X}$ the
number of zeros of $X$ on $I$. For $m\ge 1$ and distinct points $t_{1},\ldots,
t_{m}\in I$ denote $\varphi_{t_{1},t_{2},\ldots, t_{m}}(u_{1},\ldots
u_{m})$ to be the (Gaussian) probability density function of the
random vector $(X(t_{1}),\ldots X(t_{m}))\in \R^{m}$. Then, under
appropriate assumptions on $X$, the $m$-th factorial moment of $\Zc$
is given by
\begin{equation}
\label{eq:Kac_Rice}
\begin{split}
\mathbb E \left[ \Zc^{[m]} \right] = &\int_{I^m}
K_{m}(t_{1},\ldots,t_{m}) \, \mbox{d}t_{1}\ldots \mbox{d}t_{m},
\end{split}
\end{equation}
where
\[ \Zc^{[m]} := \begin{cases} \Zc(\Zc-1)\cdots (\Zc-m+1) &  1 \le m\le \Zc \\ 0 & \mbox{otherwise}. \end{cases}, \]
and $K_{m}$, given by
\begin{equation}
\label{eq:Km def}
\begin{split}
K_{m}(t_{1},\ldots t_{m}) &=
\varphi_{t_{1},\ldots,t_{m}}(0,\ldots,0)\times\\&\times\E \left[
\left \vert X'(t_1)\dots X'(t_m) \right \vert  \, \Big \vert
X(t_1)=0, \dots, X(t_m)=0 \right],
\end{split}
\end{equation}
is the $m$-th zero-{\em intensity} of $\Zc$. Note that for the
Gaussian case
\begin{equation}
\label{eq:phi(0...0) expr} \varphi_{t_{1},\ldots,t_{m}}(0,\ldots,0)
= \frac{1}{(2\pi)^{m/2}\sqrt{\det{A}}}
\end{equation}
with $A$ the covariance matrix of the values
$(X(t_{1}),\ldots,X(t_{m}))$, provided that $\det{A}\ne 0$, or,
equivalently, that the distribution of $(X(t_{1}),\ldots,X(t_{m}))$
is non-degenerate.

The validity of the meta-theorem \eqref{eq:Kac_Rice} was established
under a number of various scenarios. Originally the result
\eqref{eq:Kac_Rice} was proven to hold ~\cite{CL} provided that for
all distinct points $t_{1},\ldots,
t_{m}\in I$ the distribution of the Gaussian
vector $(X(t_{1}),\ldots, X(t_{m}),X'(t_{1}),\ldots
X'(t_{m}))\in\R^{2m}$ is non-degenerate. This non-degeneracy
condition was relaxed\footnote{This fortunate fact simplifies our
treatment of the approximate Kac-Rice formula below (though it is
possible to work ~\cite{RW2014} with the more restrictive version to
obtain the same results).} ~\cite{AW}, as in the following theorem:

\begin{theorem}[~\cite{AW}, Theorem $6.3$]
\label{thm:Kac-Rice} Let $X:I\rightarrow\R$ be a Gaussian process
having $C^1 $ paths and $m\ge 1$. Assume that for every $m $
pairwise distinct points $ t_1,\ldots ,t_{m}\in I $, the joint
distribution of $ (X(t_1), \dots X(t_m))\in \R^{m} $ is
non-degenerate. Then \eqref{eq:Kac_Rice} holds.
\end{theorem}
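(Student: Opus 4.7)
The plan is to follow the classical Kac counting approach, applying it to $m$-tuples and then computing the expectation via conditioning on the values $X(t_1),\ldots,X(t_m)$, which is where the non-degeneracy assumption enters.

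\textbf{Step 1: a deterministic counting identity.} For any $C^1$ function $f:I\to\R$ whose zeros in $I$ are all simple, a change of variables in a neighbourhood of each zero shows that
\[
\lim_{\epsilon\to 0^+}\frac{1}{2\epsilon}\int_I \mathbb{1}_{|f(t)|<\epsilon}\,|f'(t)|\,dt=\Zc(f).
\]
Multiplying $m$ copies of this identity and discarding the diagonal (whose contribution tends to $0$ with $\epsilon$ when zeros are simple and isolated) yields
\[
\Zc(f)^{[m]}=\lim_{\epsilon\to 0^+}\int_{(I^m)^\ast}\prod_{i=1}^{m}\frac{\mathbb{1}_{|f(t_i)|<\epsilon}}{2\epsilon}\,|f'(t_i)|\,dt_1\cdots dt_m,
\]
where $(I^m)^\ast$ denotes the off-diagonal part. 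The hypothesis of non-degenerate values at $m$ distinct points implies that a.s.\ $X$ has no zero at which $X'$ also vanishes: the event $\{X(t)=X'(t)=0\}$ can be analyzed using the joint distribution of $X(t)$ and $X(t+h)$ for small $h$, which is non-degenerate, together with a Borel--Cantelli covering of $I$ by shrinking intervals; thus a.s.\ all zeros of $X$ in $I$ are simple.

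\textbf{Step 2: pointwise limit of the smoothed intensity.} For any fixed $(t_1,\ldots,t_m)\in (I^m)^\ast$ with non-degenerate joint values, condition on $(X(t_1),\ldots,X(t_m))$ and use the tower property to rewrite
\[
\E\!\left[\prod_{i=1}^{m}\frac{\mathbb{1}_{|X(t_i)|<\epsilon}}{2\epsilon}|X'(t_i)|\right]=\int_{[-\epsilon,\epsilon]^m}\!\!\varphi_{t_1,\ldots,t_m}(u)\,\E\!\left[\prod_i |X'(t_i)|\,\Big|\,X(t_j)=u_j\right]\!\frac{du}{(2\epsilon)^m}.
\]
Because $(X(t_1),\ldots,X(t_m),X'(t_1),\ldots,X'(t_m))$ is jointly Gaussian and the $X(t_i)$ distribution is non-degenerate, the regression of $X'(t_i)$ onto $(X(t_j))_j$ depends continuously on the conditioning values, so the integrand is continuous in $u$ near $0$; as $\epsilon\downarrow 0$ the right-hand side converges to $\varphi_{t_1,\ldots,t_m}(0)\,\E[\prod_i|X'(t_i)|\mid X(t_j)=0]=K_m(t_1,\ldots,t_m)$.

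\textbf{Step 3: from pointwise to integral convergence.} Take expectations of Step 1's deterministic identity and apply Fubini (permitted by non-negativity) to move $\E$ inside the integral. Combining with Step 2 yields the two-sided bound on $\E[\Zc^{[m]}]$ after justifying the interchange of $\lim_{\epsilon\to 0}$ and the $I^m$ integration. For the lower bound ``$\leq$'' one applies Fatou's lemma directly to the $\epsilon$-integrals. For the upper bound ``$\geq$'' one first truncates $|X'(t_i)|$ at a level $M$, runs the same computation (where dominated convergence applies since the integrand is bounded by $M^m/(2\epsilon)^m$ times probabilities summing to $O(1)$ after use of the Gaussian density bound), then lets $M\to\infty$ and invokes monotone convergence; a localization to sub-intervals on which the Gaussian covariance matrix of values is uniformly invertible handles the behaviour as $t_i\to t_j$.

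\textbf{Main obstacle.} The delicate step is the last one: unlike the classical proof in~\cite{CL}, we are not assuming joint non-degeneracy of values \emph{and} derivatives, so the conditional density of $X'(t_i)$ given $X(t_j)=0$ may be supported on a proper affine subspace at isolated configurations and the function $K_m$ may become singular near the diagonal $\{t_i=t_j\}$. The saving feature is Gaussianity: the conditional laws remain well-defined Gaussians, the singularity of $K_m$ near the diagonal is controlled by the variance of $X(t)-X(t')$ which is bounded below on any off-diagonal compact, and the simple-zero a.s.\ dichotomy from Step 1 prevents pathological accumulations. Assembling these ingredients, the interchange is legitimate and one obtains \eqref{eq:Kac_Rice}.
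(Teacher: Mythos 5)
First, note that the paper does not prove this statement at all: it is quoted verbatim from Aza\"{i}s--Wschebor \cite{AW} (their Theorem 6.3), so the comparison must be against the known proof there, not against anything in the text. Your Steps 1 and 2 are sound in outline: Step 1 is Kac's counting identity, and the a.s.\ simplicity of zeros is indeed available here via a Bulinskaya-type lemma, which requires only $C^1$ paths and one-dimensional densities uniformly bounded near $0$ (the $m$-point non-degeneracy implies non-degenerate marginals, and continuity of the variance on the compact interval gives the uniform bound); Step 2 is standard Gaussian regression and is correct.

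The genuine gap is Step 3, and your own ``main obstacle'' paragraph asserts rather than resolves it. Concretely: (i) writing $J_\epsilon$ for the $\epsilon$-smoothed integral, Fatou on the probability space gives $\E[\Zc^{[m]}]\le\liminf_\epsilon J_\epsilon$, while Fatou in the $t$-variables gives $\int K_m\le\liminf_\epsilon J_\epsilon$ --- these are two \emph{lower} bounds for the same quantity and cannot be assembled into the equality \eqref{eq:Kac_Rice}; a matching upper bound for $\limsup_\epsilon J_\epsilon$ is exactly what is missing. (ii) Your truncation/dominated-convergence step relies on the bound $\Prob\left(|X(t_i)|<\epsilon,\ i=1,\dots,m\right)=O(\epsilon^m)$, which holds uniformly only on compact subsets of $(I^m)^\ast$ where the covariance matrix of the values is uniformly invertible. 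No such compact set contains a neighbourhood of the diagonal, since $X(t_i)$ and $X(t_j)$ become perfectly correlated as $t_i\to t_j$; so the proposed ``localization to sub-intervals on which the covariance matrix of values is uniformly invertible'' cannot, even in principle, ``handle the behaviour as $t_i\to t_j$'' --- controlling the diagonal contribution (and the possibility $\int K_m=+\infty$, in which case one must show both sides are infinite) is the entire difficulty. The actual proof in \cite{AW} avoids the fixed-level interchange altogether: one first proves the factorial-moment formula for \emph{almost every} level $u\in\R^m$ via Fubini and the area (Banach indicatrix) formula, for which no non-degeneracy is needed, and then upgrades to the specific level $u=0$ by continuity/semicontinuity in $u$ of the two sides --- the non-degeneracy of the values enters only to make the right-hand side continuous in the level, and Fatou plus Bulinskaya handle the left-hand side. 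Unless you either carry out that a.e.-level-plus-continuity argument or supply a genuine uniform-integrability estimate near the diagonal (which the hypotheses do not obviously yield, e.g.\ a second-moment bound would require non-degeneracy at $2m$ points), your Step 3 does not close.
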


The cases $m=1,2$ are of our particular interest (see the following
sections). The main problem is that for $m=2$ even the weaker
non-degeneracy hypothesis in Theorem \ref{thm:Kac-Rice} may not be
satisfied for our process $f$ as in \eqref{eq:f(t)=F(gamma(t))}; to
resolve this issue we will decompose the interval $I=[0,L]$ into
small subintervals, and apply Kac-Rice for each pair of the
subintervals to develop ``approximate Kac-Rice" formula following an
idea from \cite{RW2014} in the two-dimensional case (see
\S~\ref{sec:var=sum covar}).

The covariance function of the centered Gaussian random field $ F $
reads\[ r_F(x,y) := \mathbb{E}\left [F(x)F(y)\right ] =
\dfrac{1}{N_\zeigen} \sum_{\mu\in \mathcal{E}}\cos\left (2\pi \left
<\mu ,y-x\right >\right )\] for $ x,y\in \mathbb{T}^d $. As $F$ is
stationary ($r_F$ depending on $ y-x $ only), we may think of $r_F$
as a function of one variable on $\mathbb{T}^d $. The covariance
function of $ f $ is \[ r(t_1,t_2) = r_f(t_1,t_2) := \mathbb{E}\left
[f(t_1)f(t_2)\right ] = r_F\left (\gamma(t_1)-\gamma(t_2)\right ).
\] Therefore, $ f $ is a centered unit variance Gaussian process
(non-stationary); $ r(t_1,t_2)\ne \pm1 $ if and only if the joint
distribution of $ f(t_1),f(t_2) $ is non-degenerate, so the
probability density $ \varphi_{t_1,t_2} $ of the Gaussian random
vector $ \left ( f(t_1),f(t_2) \right ) $ exists. Denote
\[ r_1:=\frac{\partial r}{\partial t_1},  r_2:=\frac{\partial r}{\partial t_2},  r_{12}:=\frac{\partial ^2 r}{\partial t_1 \partial t_2}, \]and let
\begin{equation}
\label{eq:r_2} \Rc_{2}(\zeigen) := \int\limits_{[0,L]^{2}}
\left(r^{2}+(r_{1}/\sqrt{\zeigen})^2 +
(r_{2}/\sqrt{\zeigen})^{2}+(r_{12}/\zeigen)^2\right) \, \mbox{d}t_1
\mbox{d}t_2
\end{equation}
be the sum of second moments of $r$ and its few normalized
derivatives along $\curve$; we will control the various quantities
via $\Rc_{2}$ (see Proposition \ref{prop:approx Kac-Rice} below).
Later we will show that $\Rc_{2}(\zeigen)$ is decaying with
$\zeigen$ (\S~\ref{sec:second_moment}).

\begin{proposition}[Approximate Kac-Rice formula]
\label{prop:approx Kac-Rice} We have
\[ \Var \left (\frac{\mathcal{Z}}{\sqrt{\zeigen}}\right)  = O( \Rc_{2}(\zeigen))\]
with $\Rc_{2}$ is given by \eqref{eq:r_2}.
\end{proposition}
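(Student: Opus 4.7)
The plan is to adopt the partitioning strategy of \cite{RW2014}. Fix a small constant $c_{0}>0$, set $\eta=c_{0}/\sqrt{\zeigen}$, and split $[0,L]$ into $M\asymp L\sqrt{\zeigen}$ subintervals $I_{1},\ldots,I_{M}$ of length $\eta$. Writing $\mathcal{Z}_{i}$ for the number of zeros of $f$ in $I_{i}$, we have
\begin{equation*}
\Var(\mathcal{Z})=\sum_{i,j=1}^{M}\Cov(\mathcal{Z}_{i},\mathcal{Z}_{j}).
\end{equation*}
Call an off-diagonal pair $(i,j)$ with $i\ne j$ \emph{good} if $|r(t_{1},t_{2})|\le 1-\kappa$ throughout $I_{i}\times I_{j}$ for a fixed small $\kappa>0$; all remaining pairs (including the diagonal $i=j$) are declared \emph{bad}.

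For each good pair, Theorem~\ref{thm:Kac-Rice} applies to $f$ on $I_{i}\cup I_{j}$ (non-degeneracy of the values is ensured by the uniform bound $|r|\le 1-\kappa$) and yields
\begin{equation*}
\Cov(\mathcal{Z}_{i},\mathcal{Z}_{j})=\int_{I_{i}\times I_{j}}\bigl(K_{2}(t_{1},t_{2})-K_{1}(t_{1})K_{1}(t_{2})\bigr)\,dt_{1}dt_{2}.
\end{equation*}
The $2\times 2$ covariance matrix $A$ in \eqref{eq:phi(0...0) expr} has determinant $1-r^{2}\ge\kappa(2-\kappa)$, hence is uniformly invertible. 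A Taylor expansion of $\varphi_{t_{1},t_{2}}(0,0)$ and of the conditional expectation in \eqref{eq:Km def} about the decorrelated limit $r=r_{1}=r_{2}=r_{12}=0$, using $\mathbb{E}[f'(t)^{2}]\asymp\zeigen$ (which follows from $|\gamma'|=1$ and the isotropy of the average $\frac{1}{N}\sum_{\mu}\mu\otimes\mu$), yields the pointwise bound
\begin{equation*}
|K_{2}(t_{1},t_{2})-K_{1}(t_{1})K_{1}(t_{2})|\ll \zeigen\,\Bigl(r^{2}+\frac{r_{1}^{2}+r_{2}^{2}}{\zeigen}+\frac{r_{12}^{2}}{\zeigen^{2}}\Bigr).
\end{equation*}
Summing over good pairs bounds their total contribution to $\Var(\mathcal{Z})$ by $\ll\zeigen\,\Rc_{2}(\zeigen)$.

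For bad pairs we apply Cauchy-Schwarz, $|\Cov(\mathcal{Z}_{i},\mathcal{Z}_{j})|\le\sqrt{\Var(\mathcal{Z}_{i})\Var(\mathcal{Z}_{j})}$. A direct application of Theorem~\ref{thm:Kac-Rice} on each $I_{i}$ gives $\Var(\mathcal{Z}_{i})\ll 1$ (the diagonal singularity of $K_{2}$ is integrable, and the factor $\eta^{2}\zeigen\asymp 1$ keeps the integral bounded), so each bad covariance is $\ll 1$. To count bad pairs, note that $|r(t_{1}^{*},t_{2}^{*})|\ge 1-\kappa$ at some $(t_{1}^{*},t_{2}^{*})\in I_{i}\times I_{j}$, combined with the Lipschitz estimates $|r_{1}|,|r_{2}|\ll\sqrt{\zeigen}$, forces $|r|\ge 1/2$ throughout $I_{i}\times I_{j}$ (provided $c_{0},\kappa$ are chosen small enough), so that $\int_{I_{i}\times I_{j}}r^{2}\gg\eta^{2}\asymp 1/\zeigen$. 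Since the sets $I_{i}\times I_{j}$ are disjoint and their $r^{2}$-masses sum to at most $\Rc_{2}(\zeigen)$, the number of bad pairs is $\ll\zeigen\,\Rc_{2}(\zeigen)$, and hence so is their total variance contribution. Dividing through by $\zeigen$ completes the proof.

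The main obstacle is the uniform Taylor expansion of $K_{2}-K_{1}\otimes K_{1}$ on good pairs, where the implied constants depend on the separation parameter $\kappa$ and must be tracked carefully; a parallel subtlety is the matching lower bound on the $\Rc_{2}$-mass of each bad pair, which is what makes the Cauchy-Schwarz step efficient. Both calculations closely parallel the two-dimensional analogue \cite{RW2014}; the advantage of using Theorem~\ref{thm:Kac-Rice} (following \cite{AW}) over the classical formulation of \cite{CL} is that only non-degeneracy of the joint distribution of the \emph{values} $(f(t_{1}),f(t_{2}))$ is needed, which allows a uniform treatment even when the joint distribution of values \emph{together with} derivatives may degenerate at isolated points of $\curve$.
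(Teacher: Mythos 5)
Your proposal is correct and follows essentially the same route as the paper's proof: the same wavelength-scale partition into intervals of length $\asymp c_0/\sqrt{\zeigen}$, the covariance decomposition \eqref{eq:var=sum cov ij}, your good/bad dichotomy matching the paper's nonsingular/singular cubes with the same Lipschitz-plus-mass-counting bound on the number of bad cubes (Lemma~\ref{lem:3.7}), Cauchy--Schwarz with the uniform bound $\Var(\Zc_i)\ll 1$ (Corollary~\ref{col:cov_bound}), and the same Taylor expansion of $K_2-K_1K_1$ on good cubes (Lemma~\ref{lem:K2=O(r^2 der)}). The only steps you leave implicit are precisely the paper's Lemmas~\ref{lem:diag_cubes} and~\ref{lem:ptwise_bnd}: that $r(t_1,t_2)\ne\pm1$ for distinct points of a single short interval (which is what licenses your ``direct application'' of Theorem~\ref{thm:Kac-Rice} on each $I_i$), and that $K_2=O(\zeigen)$ near the diagonal --- the latter being a genuine cancellation estimate, $\alpha(1-r^2)-r_i^2=O\left(\zeigen^{5/2}|t_2-t_1|^3\right)$ against $(1-r^2)^{3/2}\asymp \zeigen^{3/2}|t_2-t_1|^3$, rather than mere integrability of a diagonal singularity.
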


The rest of this section is dedicated to the proof of Proposition
\ref{prop:approx Kac-Rice}, finally given in
\S~\ref{sec:var=sum covar}, following some preparations.

\subsection{Expectation}

Since $ f $ is a centered unit variance Gaussian process with $ C^1
$ paths (in, particular, the non-degeneracy condition of Theorem
\ref{thm:Kac-Rice} is automatically satisfied), we may use the
Kac-Rice formula \eqref{eq:Kac_Rice}; for $m=1$ it reads
\begin{equation}
\label{eq:E[Z]=int(K1)} \E[\Zc] = \int\limits_{I}K_{1}(t) \, \mbox{d}t
\end{equation}
with \[ K_1(t)= \frac{1}{\sqrt{2 \pi}}\cdot \E\left [ \left
|f'(t)\right | \Big| \,  f(t)=0 \right ] ,\] the ``zero density" of
$f$ (here \eqref{eq:phi(0...0) expr} reads
$\varphi_{t}(0)=\frac{1}{\sqrt{2\pi}}$). Let $ \Gamma $ be the
covariance matrix of $ \left (f(t),f'(t)\right ) $: \[ \Gamma(t)=
\begin{pmatrix}
 r(t,t) & r_1(t,t) \\ r_2(t,t) & r_{12}(t,t)
\end{pmatrix}. \]

\begin{lemma}
For a smooth curve of length $ L $, the expectation of nodal intersections number is given by
\begin{equation*}
\E[\Zc] = L  \frac{2}{\sqrt{d}}\cdot \sqrt{\zeigen}
\end{equation*}
\end{lemma}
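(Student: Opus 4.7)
The plan is to apply the Kac-Rice formula \eqref{eq:E[Z]=int(K1)} for $m=1$, for which the hypothesis of Theorem~\ref{thm:Kac-Rice} is automatic since $f$ has unit variance (hence non-degenerate one-dimensional marginals). Everything then reduces to an explicit evaluation of the zero density $K_1(t)$.

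First I would observe that the pair $(f(t),f'(t))$ is uncorrelated, and hence independent: since $r(t,t)\equiv 1$, differentiating in $t$ and using the symmetry $r(t_1,t_2)=r(t_2,t_1)$ (so that $r_1(t,t)=r_2(t,t)$) forces $r_1(t,t)=0=\E[f(t)f'(t)]$. Consequently the conditional expectation in $K_1$ collapses to an unconditional one, and the standard absolute-moment formula for a centered Gaussian yields
\[
K_1(t)=\frac{1}{\sqrt{2\pi}}\,\E|f'(t)|=\frac{1}{\sqrt{2\pi}}\cdot\sqrt{\tfrac{2}{\pi}}\cdot\sqrt{r_{12}(t,t)}=\frac{1}{\pi}\sqrt{r_{12}(t,t)}.
\]

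The remaining task is therefore to show that $r_{12}(t,t)$ is identically equal to $4\pi^{2}\zeigen/d$. Differentiating the spectral expansion of $r$ twice gives
\[
r_{12}(t,t)=\frac{(2\pi)^{2}}{N_\zeigen}\sum_{\mu\in\vE(\zeigen)}\langle\mu,\gamma'(t)\rangle^{2}.
\]
The key arithmetic input is that the set $\vE(\zeigen)\subseteq\Z^{d}$ is invariant under the hyperoctahedral group of coordinate permutations and sign changes. This symmetry forces the matrix $\sum_{\mu\in\vE}\mu\otimes\mu$ to be a scalar multiple of the identity, and taking the trace identifies the scalar:
\[
\sum_{\mu\in\vE(\zeigen)}\mu_{i}\mu_{j}=\frac{N_\zeigen\,\zeigen}{d}\,\delta_{ij}.
\]
Contracting with the unit vector $\gamma'(t)$ (recall $|\gamma'(t)|=1$) yields $\sum_{\mu}\langle\mu,\gamma'(t)\rangle^{2}=N_\zeigen\,\zeigen/d$, and hence $r_{12}(t,t)=4\pi^{2}\zeigen/d$.

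Substituting back gives $K_{1}(t)\equiv \frac{2}{\sqrt{d}}\sqrt{\zeigen}$, which integrated over $[0,L]$ produces the stated expectation. There is no serious obstacle here: the only subtle point is the independence $f\perp f'$ at each $t$, which makes $K_{1}$ pointwise constant, and the lattice-symmetry computation that eliminates any dependence of $K_{1}$ on the direction of the tangent $\gamma'(t)$. The latter is the reason the expected intersection number is \emph{universal}, i.e. depends on $\curve$ only through its length $L$ and not through its geometry.
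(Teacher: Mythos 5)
Your proposal is correct and follows essentially the same route as the paper: Kac--Rice for $m=1$ (with non-degeneracy automatic from unit variance), the observation that $f(t)\perp f'(t)$ so that $K_1(t)=\frac{1}{\pi}\sqrt{r_{12}(t,t)}$, and the identification $r_{12}(t,t)=4\pi^2\zeigen/d$. The only cosmetic difference is that you derive the identity $\sum_{\mu\in\vE}\mu_i\mu_j=\frac{N_\zeigen\zeigen}{d}\delta_{ij}$ directly from the hyperoctahedral symmetry of $\vE(\zeigen)$, whereas the paper cites the equivalent scalar-Hessian fact $H_{r_F}(0,0)=-\frac{4}{d}\pi^2\zeigen\cdot I_d$ from \cite{RW}; both amount to the same arithmetic input.
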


\begin{proof}
Since $ f $ is unit variance, it is immediate that for every $ t\in
[0,L] $ we have $ r_1(t,t)=r_2(t,t)=0 $, so  \[
\Gamma(t)=\begin{pmatrix} 1&\\&\alpha
\end{pmatrix}, \] where  $$\alpha := r_{12} (t,t) = -\dot{\gamma}(t)^tH_{r_F}(0,0)\dot{\gamma}(t)$$ and $ H_{r_F}$ is the Hessian of $ r_F $ (see e.g. \cite{RW2014}). Since $ H_{r_F}(0,0) $ is a scalar matrix \cite{RW} \[ H_{r_F}(0,0)= \dfrac{-4}{d}\pi ^2\zeigen\ \cdot I_d \] it follows that $$ \alpha =  \dfrac{4}{d}\pi ^2\zeigen. $$ The distribution of $ f'(t)$ conditional on $  f(t)=0  $ is centered Gaussian with variance $ \alpha $. Recall that for $ X  \sim N(0,\sigma^2) $ we have $ \mathbb E(|X|)= \sigma \sqrt{2/ \pi}$, so
\begin{equation} \label{eq:K_1}
K_1(t)=\dfrac{1}{\pi}\sqrt{\alpha}=
\dfrac{2}{\sqrt{d}}\sqrt{\zeigen}
\end{equation}
independent of $x$, and the statement of the lemma follows upon
substituting \eqref{eq:K_1} into \eqref{eq:E[Z]=int(K1)}.
\end{proof}

\subsection{Variance}

For $m=2$ the Kac-Rice formula \eqref{eq:Kac_Rice} reads
\begin{equation}
\label{eq:Kac_Rice2} \E[\Zc^{2}-\Zc]=\int\limits_{I\times I}
K_{2}(t_{1},t_{2}) \, \mbox{d}t_{1}\mbox{d}t_{2}
\end{equation}
with $K_{2}$, the ``$2$-point correlation function", defined for
$r(t_{1}, t_{2})\ne \pm 1$ as (see \eqref{eq:Km def} and
\eqref{eq:phi(0...0) expr})
\[ K_2(t_1,t_2)=\frac{1}{2\pi \sqrt{1-r^{2}}} \cdot \mathbb{E}\left[\left\vert f'(t_1)\right\vert \cdot \left \vert f'(t_2)\right \vert \Big| \, f(t_1)=f(t_2)=0  \right ], \]
holding (Theorem \ref{thm:Kac-Rice}) provided that for all $t_{1}\ne
t_{2}$ we have $r(t_{1},t_{2}) \ne \pm 1$ (equivalently, the
distribution of $(f(t_{1}),f(t_{2}))$ is non-degenerate).
Equivalently, the zero number variance is given by (cf.
\eqref{eq:E[Z]=int(K1)})
\begin{equation}
\label{eq:Kac_Rice var}
 \Var\left (\mathcal{Z}\right ) =\int\limits_{I\times I}\left ( K_2(t_1,t_2) - K_1(t_1)K_1(t_2) \right ) \, \mbox{d}t_1 \mbox{d} t_2 + \mathbb{E}\left [\mathcal{Z}\right ].
\end{equation}

As it was mentioned above, in our case the assumption that $
r(t_1,t_2)\ne \pm 1$ for all $t_1 \ne t_2 $ may not be satisfied,
and, as explained in \cite{RW2014}, it is easy to construct an
example of a curve where the Kac-Rice formula for the second
factorial moment \eqref{eq:Kac_Rice2} does not hold. To resolve this
situation we will divide the interval $I=[0,L]$ into small
subintervals, and note that the proof of ~\cite{AW} Theorem $6.3$
yields that if $J_{1},J_{2}\subseteq I$ are two {\em disjoint}
subintervals, then (recall that we denoted $\Zc_{J}$ to be the
number of zeros of $f$ on a subinterval $J\subseteq I$)
\begin{equation}
\label{eq:Kac-Rice covar disjoint} \E[\Zc_{J_{1}}\cdot
\Zc_{J_{2}}]=\int\limits_{J_{1}\times J_{2}}
K_{2}(t_{1},t_{2}) \, \mbox{d}t_{1}\mbox{d}t_{2},
\end{equation}
provided that for all $t_{1}\in J_{1}$, $t_{2}\in J_{2}$,
$$r(t_{1},t_{2})\ne \pm 1.$$

The $2$-point correlation function was evaluated ~\cite{RW2014}
explicitly to be
\begin{equation}
\label{eq:K2_explicit} K_{2}(t_{1},t_{2}) = \frac{1}{\pi^{2}
(1-r^{2})^{3/2}}\cdot \mu\cdot
(\sqrt{1-\rho^{2}}+\rho\arcsin{\rho}),
\end{equation}
where
\begin{equation}
\label{eq: mu def} \mu = \mu_{\zeigen}(t_{1},t_{2}) =
\sqrt{\alpha(1-r^{2})-r_{1}^{2}}\cdot \sqrt{\alpha
(1-r^{2})-r_{2}^{2}},
\end{equation}
and
\begin{equation}
\label{eq:rho def} \rho = \rho_{\zeigen}(t_{1},t_{2}) =
\frac{r_{12}(1-r^{2})+rr_{1}r_{2}}{\sqrt{\alpha (1-r^{2})-r_{1}^{2}}
\cdot \sqrt{\alpha (1-r^{2})-r_{2}^{2}}}
\end{equation}
(it follows from the derivation of \eqref{eq:K2_explicit} that
$|\rho|\le 1$).

\subsection{Proof of Proposition \ref{prop:approx Kac-Rice}}
\label{sec:var=sum covar}

Before giving the proof of Proposition \ref{prop:approx Kac-Rice} we
will have to do some preparatory work. To overcome the
above-mentioned obstacle we let $ c_0 $ be a sufficiently small
constant to be chosen below, and decompose the interval $[0,L]$ into
small intervals of length roughly $c_{0}\cdot
\frac{1}{\sqrt{\zeigen}}$ so that we can apply Kac-Rice on the
corresponding diagonal cubes. To be more concrete, let $ k = \lfloor
L \cdot \frac{\sqrt{\zeigen}}{c_0} \rfloor + 1 $ and $ \delta_0 =
\frac{L}{k} $, and divide the interval $ [0,L] $ into the
subintervals $ I_i = \left [ (i-1)\delta_0, i\delta_0 \right ] $
where $ i=1,\dots, k $. Note that $ \delta_0 \asymp
\frac{1}{\sqrt{\zeigen}}. $ With $ \mathcal{Z}_i $ denoting the
number of zeros of $ f $ on $ I_i $ $\left ( i=1,\dots , k\right )$
we have
\begin{equation}
\label{eq:var=sum cov ij} \Var(\Zc) =
\sum\limits_{i,j}\Cov(\Zc_{i},\Zc_{j}).
\end{equation}

Our first goal is to give an upper bound for the individual summands
in \eqref{eq:var=sum cov ij}; to this end we need the following
lemmas, whose proofs are postponed till \S~\ref{sec:aux lem
Tayl proof}:

\begin{lemma}
\label{lem:diag_cubes}
There exists a constant $ c_0 > 0 $ sufficiently small, such that \\
for all $ t_1\ne t_2 \in [0,L] $ with $ \left \vert t_2 - t_1 \right
\vert < c_0/\sqrt{\zeigen} $ we have $$ r(t_1,t_2)\ne\pm 1 .$$
\end{lemma}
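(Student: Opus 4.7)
The plan is to prove Lemma~\ref{lem:diag_cubes} by Taylor-expanding $r(t_1,t_2) = r_F(\gamma(t_1)-\gamma(t_2))$ about the diagonal, and choosing $c_0$ so small that the quadratic main term strictly dominates the quartic remainder. Two expansions will be combined.

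First, since every $\mu \in \mathcal{E}(\zeigen)$ has $|\mu| = \sqrt{\zeigen}$, the fourth-order Taylor formula for cosine gives, uniformly for $\sqrt{\zeigen}\,|v|$ bounded,
\[ \cos(2\pi\langle\mu,v\rangle) = 1 - 2\pi^2\langle\mu,v\rangle^2 + O\bigl(\zeigen^2|v|^4\bigr). \]
Averaging over $\mu$ and invoking the identity $\frac{1}{N_\zeigen}\sum_{\mu\in\mathcal{E}(\zeigen)}\mu\mu^T = \frac{\zeigen}{d}I_d$ (which is just the Hessian formula $H_{r_F}(0) = -\frac{4\pi^2\zeigen}{d}I_d$ already recalled in the expectation computation) yields
\[ r_F(v) = 1 - \frac{2\pi^2 \zeigen}{d}|v|^2 + O\bigl(\zeigen^2|v|^4\bigr). \]
Second, since $\gamma$ is a smooth unit-speed parameterization, $\langle \dot\gamma,\ddot\gamma\rangle \equiv 0$, and a direct Taylor expansion gives
\[ |\gamma(t_1)-\gamma(t_2)|^2 = (t_1-t_2)^2\bigl(1 + O((t_1-t_2)^2)\bigr) \]
uniformly on $[0,L]$, with implied constant depending only on $\curve$.

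Combining these with $s = t_2-t_1$ and $v = \gamma(t_1)-\gamma(t_2)$ produces
\[ 1 - r(t_1,t_2) = \frac{2\pi^2\zeigen}{d}\,s^2 + O\bigl(\zeigen s^4\bigr) + O\bigl(\zeigen^2 s^4\bigr). \]
When $|s| < c_0/\sqrt{\zeigen}$, each error term is bounded by an absolute constant times $c_0^2\cdot \frac{\zeigen s^2}{d}$, so choosing $c_0$ sufficiently small (depending only on $d$ and $\curve$) forces
\[ 1 - r(t_1,t_2) \geq \frac{\pi^2\zeigen}{d}\,s^2 > 0 \qquad (s\neq 0). \]
In particular $r(t_1,t_2) < 1$; and because the same inequality shows $r(t_1,t_2) \geq 1 - O(c_0^2)$ lies close to $1$ in this range, automatically $r(t_1,t_2) \neq -1$ as well.

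There is no real conceptual obstacle; the delicacy is only the bookkeeping of the two error terms. The crucial scaling observation is that $|\mu|\cdot |v| \leq c_0$ uniformly in $\mu \in \mathcal{E}(\zeigen)$ and in $(t_1,t_2)$ with $|t_1-t_2|<c_0/\sqrt{\zeigen}$, which is exactly what permits a uniform quartic Taylor remainder for the cosines before averaging.
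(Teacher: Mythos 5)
Your proof is correct and takes essentially the same approach as the paper's: a quantitative Taylor expansion of $r$ about the diagonal, with the quadratic term identified through the scalar second-moment identity $H_{r_F}(0)=-\frac{4\pi^2 E}{d}I_d$ (so the leading term is $\frac{\alpha}{2}(t_2-t_1)^2$ with $\alpha=\frac{4\pi^2E}{d}$), and $c_0$ chosen small enough that this term dominates the remainder. The only cosmetic difference is that the paper expands $t_2\mapsto r(t_1,t_2)$ directly, with a cubic remainder $O\left(\left(\sqrt{E}\,|t_2-t_1|\right)^3\right)$, whereas you split the expansion into a spatial one for $r_F$ and a chord-length one for $|\gamma(t_1)-\gamma(t_2)|^2$, reaching the same conclusion.
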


\begin{lemma}[Uniform bound on the $2$-point correlation function around the diagonal]
\label{lem:ptwise_bnd} For all $0<|t_{2}-t_{1}|< c_{0}/
\sqrt{\zeigen}$ we have
\begin{equation*}
K_{2}(t_{1},t_{2}) = O(\zeigen).
\end{equation*}
\end{lemma}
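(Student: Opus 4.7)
The plan is to expand every ingredient of the explicit formula \eqref{eq:K2_explicit} in $s := t_2 - t_1$ around $s = 0$ and to exploit an exact cancellation of the leading terms in $\mu$. Writing $r(t_1,t_2) = r_F(\gamma(t_1)-\gamma(t_2))$, I would use the two key structural features of $r_F$: it is even in its argument (so its odd-order partial derivatives at the origin vanish), and its partial derivatives of order $k$ are uniformly bounded on $\T^d$ by $(2\pi\sqrt{\zeigen})^k$ (each differentiation of a spectral summand contributes at most $2\pi|\mu| \le 2\pi\sqrt{\zeigen}$). Combined with $|\gamma'|\equiv 1$ and $\nabla r_F(0)=0$, a standard Taylor expansion along $\gamma$ yields, uniformly in $t\in[0,L]$,
\begin{align*}
r(t,t+s) &= 1 - \tfrac{\alpha}{2}\, s^{2} + O(\zeigen^{2} s^{4}),\\
r_1(t,t+s) &= \alpha\, s + O(\zeigen^{2} s^{3}), \qquad r_2(t,t+s) = -\alpha\, s + O(\zeigen^{2} s^{3}),
\end{align*}
with $\alpha = 4\pi^{2}\zeigen/d$ and constants depending only on $\gamma$. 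For $|s|<c_0/\sqrt{\zeigen}$ (hence $\zeigen s^{2}\le c_0^{2}$), this already gives
\[
1-r^{2} = \alpha s^{2}\bigl(1 + O(c_0^{2})\bigr) \asymp \zeigen\, s^{2},
\]
which, choosing $c_0$ small enough, simultaneously proves Lemma \ref{lem:diag_cubes} (since then $|r|<1$).

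The crucial observation is that in the numerator factors of $\mu$ the leading $\alpha^{2}s^{2}$ terms cancel exactly:
\[
\alpha(1-r^{2}) - r_i^{2} = \bigl(\alpha^{2} s^{2} + O(\zeigen^{3} s^{4})\bigr) - \bigl(\alpha^{2} s^{2} + O(\zeigen^{3} s^{4})\bigr) = O(\zeigen^{3} s^{4}), \qquad i=1,2.
\]
(Non-negativity is automatic, as $\alpha(1-r^{2})-r_i^{2}$ equals $(1-r^{2})$ times the conditional variance of $f'(t_i)$ given $f(t_1)=f(t_2)=0$.) Consequently
\[
\mu = \sqrt{\alpha(1-r^{2})-r_1^{2}}\;\sqrt{\alpha(1-r^{2})-r_2^{2}} = O(\zeigen^{3} s^{4}),
\]
while $(1-r^{2})^{3/2} \asymp \zeigen^{3/2}|s|^{3}$. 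Using the trivial bound $|\sqrt{1-\rho^{2}}+\rho\arcsin\rho|\le 1+\pi/2$ (valid for $|\rho|\le 1$, which we are given), the explicit formula \eqref{eq:K2_explicit} gives
\[
K_2(t_1,t_2) \ll \frac{\mu}{(1-r^{2})^{3/2}} \ll \frac{\zeigen^{3} s^{4}}{\zeigen^{3/2}|s|^{3}} = \zeigen^{3/2}|s| \ll \zeigen^{3/2}\cdot \frac{c_0}{\sqrt{\zeigen}} = c_0\,\zeigen,
\]
which is $O(\zeigen)$ as claimed.

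The only real obstacle is bookkeeping: one must verify the Taylor expansions uniformly in $t$, absorbing all constants into bounded geometric data of $\gamma$ (such as $\|\gamma^{(k)}\|_{\infty}$ for $k\le 4$) and the uniform spectral bound $|\partial^k r_F(x)| \le (2\pi\sqrt{\zeigen})^{k}$ (independent of $x$). The vanishing of the $s^{3}$-coefficient of $r(t,t+s)$ and of the $s^{2}$-coefficient of $r_1(t,t+s)$ that makes the cancellation above exact reduces to the identity $\gamma'\cdot\gamma''=0$ (unit speed) together with $\nabla r_F(0)=0$ (evenness); once these are in hand, the rest of the argument is the routine chain-rule computation sketched above.
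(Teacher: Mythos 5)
Your proposal is correct and takes essentially the same route as the paper's proof: bound the factor $\sqrt{1-\rho^{2}}+\rho\arcsin\rho$ trivially, Taylor-expand $r$, $r_1$, $r_2$ around the diagonal, and exploit the cancellation of the leading $\alpha^{2}s^{2}$ terms in $\alpha(1-r^{2})-r_i^{2}$ against $(1-r^{2})^{3/2}\asymp E^{3/2}|s|^{3}$ in the denominator. Your parity-based refinement (evenness of $r_F$ together with $\gamma'\cdot\gamma''=0$) yields the sharper remainder $O(E^{3}s^{4})$ and hence $K_2\ll E^{3/2}|s|$, slightly stronger than required; the paper uses the cruder expansions $r_1=\alpha s\left(1+O\left(\sqrt{E}\,s\right)\right)$ etc., getting $\alpha(1-r^{2})-r_i^{2}=O\left(E^{5/2}|s|^{3}\right)$, which already suffices for the stated bound $K_2=O(E)$.
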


\begin{corollary}

\label{col:cov_bound} We have
\begin{equation*}
\Cov(\Zc_{i},\Zc_{j}) = O(1),
\end{equation*}
uniformly for all $i,j$ and $\zeigen$ (the implied constant is
universal).
\end{corollary}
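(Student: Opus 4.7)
The plan is to reduce everything to a single estimate, $\Var(\Zc_{i})=O(1)$, and then obtain the off-diagonal bound for free from Cauchy--Schwarz. The key observation is that the diagonal cube $I_{i}\times I_{i}$ has side $\delta_{0}\asymp 1/\sqrt{\zeigen}$, and on this cube the Kac-Rice formula \eqref{eq:Kac_Rice2} is applicable thanks to Lemma \ref{lem:diag_cubes}, while the integrand $K_{2}$ is controlled pointwise by $O(\zeigen)$ via Lemma \ref{lem:ptwise_bnd}. These two powers of $\zeigen$ (one from the area, one from the integrand) will exactly cancel, giving an absolute constant.

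First I would bound the diagonal terms. For any $t_{1}\ne t_{2}$ in $I_{i}$ we have $|t_{1}-t_{2}|<\delta_{0}\le c_{0}/\sqrt{\zeigen}$, so by Lemma \ref{lem:diag_cubes} the Gaussian vector $(f(t_{1}),f(t_{2}))$ is non-degenerate, and Theorem \ref{thm:Kac-Rice} gives
\begin{equation*}
\E[\Zc_{i}(\Zc_{i}-1)]=\int_{I_{i}\times I_{i}}K_{2}(t_{1},t_{2})\,\mathrm{d}t_{1}\mathrm{d}t_{2}.
\end{equation*}
By Lemma \ref{lem:ptwise_bnd}, the right-hand side is bounded by $O(\zeigen)\cdot\delta_{0}^{2}=O(1)$. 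Combined with $\E[\Zc_{i}]=K_{1}\cdot\delta_{0}=O(1)$ (from \eqref{eq:K_1} and $\delta_{0}\asymp 1/\sqrt{\zeigen}$), this yields $\E[\Zc_{i}^{2}]=O(1)$, and a fortiori $\Var(\Zc_{i})=O(1)$ uniformly in $i$ and $\zeigen$.

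Second, for $i\ne j$ the plain Cauchy--Schwarz inequality for covariances gives
\begin{equation*}
|\Cov(\Zc_{i},\Zc_{j})|\le \sqrt{\Var(\Zc_{i})\Var(\Zc_{j})}=O(1),
\end{equation*}
so there is no need to invoke Kac-Rice on non-diagonal cubes at all; in particular we do not have to worry about whether the non-degeneracy hypothesis of Theorem \ref{thm:Kac-Rice} holds across distant or adjacent subintervals, which is exactly the obstruction motivating the whole approximate Kac-Rice setup.

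The only mild obstacle is bookkeeping: verifying that $\delta_{0}\le c_{0}/\sqrt{\zeigen}$ so that Lemmas \ref{lem:diag_cubes} and \ref{lem:ptwise_bnd} actually apply to every pair of distinct points in $I_{i}$. This is immediate from the definition $k=\lfloor L\sqrt{\zeigen}/c_{0}\rfloor+1$, which forces $\delta_{0}=L/k<c_{0}/\sqrt{\zeigen}$. With this in place the corollary follows at once from the two displays above.
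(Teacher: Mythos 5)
Your proof is correct and follows essentially the same route as the paper: Kac--Rice applied on each diagonal cube (justified by Lemma \ref{lem:diag_cubes}), the pointwise bound $K_{2}=O(\zeigen)$ of Lemma \ref{lem:ptwise_bnd} to cancel the area $\delta_{0}^{2}\asymp 1/\zeigen$, and Cauchy--Schwarz for the off-diagonal covariances. The only cosmetic difference is that you bound $\E[\Zc_{i}^{2}]$ directly via the second factorial moment instead of using the variance identity \eqref{eq:Kac_Rice var} as the paper does, which amounts to the same computation.
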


\begin{proof}[Proof of Corollary \ref{col:cov_bound} assuming lemmas \ref{lem:diag_cubes}-\ref{lem:ptwise_bnd}.]
By Lemma \ref{lem:diag_cubes}, $ r(t_1,t_2) \ne \pm 1 $ for all $ t_1 \neq t_2 $ in every
diagonal cube $ I_i^2. $ Hence (Theorem \ref{thm:Kac-Rice} applied
on the interval $I_{i}$ corresponding to a diagonal cube $I_i^2$)
we can apply Kac-Rice \eqref{eq:Kac_Rice var} to compute the
variance of $\Zc_{i}$:
\[\Var\left (\mathcal{Z}_i\right ) =\int\limits_{I_i^2}\left ( K_2(t_1,t_2) - K_1(t_1)K_1(t_2) \right ) \, \mbox{d}t_1 \mbox{d} t_2 + \mathbb{E}\left [\mathcal{Z}_i\right ].
\] By Kac-Rice we have \[ \mathbb{E}\left [ \mathcal{Z}_i \right ] = \int\limits_{I_i}K_1(t) \, \mbox{d}t = \delta_0 \cdot \frac{2}{\sqrt{d}}\sqrt{\zeigen}; \] using Lemma \ref{lem:ptwise_bnd} we conclude that $$ \Var\left (\mathcal{Z}_i\right ) \ll \zeigen\delta_0^2+\sqrt{\zeigen}\delta_0 \ll 1 .$$
This proves the statement of the corollary for $ i=j $; the result
for arbitrary $ i,j $ follows from the above and the Cauchy-Schwartz
inequality
\begin{equation*}
\Cov(\Zc_{i},\Zc_{j}) \le \sqrt{\Var\left (\mathcal{Z}_i\right
)\cdot \Var\left (\mathcal{Z}_j\right )}.
\end{equation*}
\end{proof}

\begin{definition}(Singular and nonsingular cubes.)

\label{def:singular}

\begin{enumerate}
\item Let
$$S_{ij} = I_{i}\times I_{j} = [i\delta_{0}, (i+1)\delta_{0}] \times [j\delta_{0}, (j+1)\delta_{0}]$$ be a cube in $[0,L]^{2}$.
We say that $S_{ij}$ is a {\em singular} if it contains a point
$(t_{1},t_{2})\in S_{ij}$ satisfying $$|r (t_{1},t_{2})| > 1/2. $$

\item The union of all the singular cubes is the singular set
$$B=B_{\zeigen} = \bigcup\limits_{S_{ij}\text{ singular}}S_{ij}.$$

\end{enumerate}

\end{definition}

Note that since $r/\sqrt{E}$ is a Lipschitz function with a
universal constant (independent of $ \zeigen $), if $S_{ij}$ is a
singular cube, then $$|r(t_1,t_2)|>1/4$$ everywhere on $S_{ij}$,
provided that $c_{0}$ is chosen sufficiently small. Using the above
it is easy to obtain the following bound on the number of singular
cubes:

\begin{lemma}\label{lem:3.7}
The number of singular cubes is bounded above by \[ E\cdot
\int\limits_{[0,L]^{2}} r^{2}(t_1,t_2)  \, \mbox{d}t_1
\mbox{d}t_2.\]
\end{lemma}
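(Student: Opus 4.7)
The plan is to exploit the Lipschitz remark made immediately before the lemma: if $S_{ij}$ is singular, then $|r(t_1,t_2)| > 1/4$ on all of $S_{ij}$, not merely at one point. This turns a pointwise hypothesis into a uniform lower bound that can be integrated.

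Concretely, I would first record that the area of the singular set satisfies
\[
\operatorname{area}(B) \;=\; \#\{\text{singular cubes}\}\cdot \delta_0^2 \;\asymp\; \frac{\#\{\text{singular cubes}\}}{\zeigen},
\]
using $\delta_0 \asymp 1/\sqrt{\zeigen}$. Next, by the remark preceding the lemma (a consequence of the fact that each partial derivative of $r$ is bounded by a universal constant times $\sqrt{\zeigen}$, so that $r/\sqrt{\zeigen}$ is Lipschitz with a universal constant, and that the cube side length is $\delta_0 \asymp c_0/\sqrt{\zeigen}$ with $c_0$ chosen small), we have $|r(t_1,t_2)| > 1/4$ throughout every singular cube.

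Combining these, I would write
\[
\int_{[0,L]^2} r^2(t_1,t_2)\,\mbox{d}t_1\mbox{d}t_2 \;\geq\; \int_{B} r^2(t_1,t_2)\,\mbox{d}t_1\mbox{d}t_2 \;\geq\; \frac{1}{16}\,\operatorname{area}(B) \;\gg\; \frac{\#\{\text{singular cubes}\}}{\zeigen},
\]
and rearranging yields the claimed bound
\[
\#\{\text{singular cubes}\} \;\ll\; \zeigen \cdot \int_{[0,L]^{2}} r^{2}(t_1,t_2)\,\mbox{d}t_1\mbox{d}t_2.
\]

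There is essentially no obstacle here; the only point requiring a little care is the justification of the uniform Lipschitz constant for $r/\sqrt{\zeigen}$, but this is exactly what the paragraph just before the lemma asserts and follows immediately from differentiating the covariance $r_F(x) = N_\zeigen^{-1}\sum_{\mu\in\vE}\cos(2\pi\langle\mu,x\rangle)$ under the sum: each derivative brings down a factor of size $\sqrt{\zeigen}$ since $|\mu|=\sqrt{\zeigen}$ for $\mu\in\vE(\zeigen)$, while $\gamma$ being unit speed contributes only universal constants.
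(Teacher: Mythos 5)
Your proof is correct and is essentially the paper's own argument: the paper invokes the Chebyshev--Markov inequality to get $\meas(B) \ll \int_{[0,L]^2} r^2\,\mbox{d}t_1\mbox{d}t_2$ and then divides by the cube volume $\asymp 1/\zeigen$, which is exactly your computation with the uniform bound $|r|>1/4$ on singular cubes (guaranteed by the Lipschitz remark) made explicit. The only cosmetic point is that, as in the paper, the conclusion holds up to an absolute implied constant, which is all that is needed in the proof of Proposition \ref{prop:approx Kac-Rice}.
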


\begin{proof}
Using the Chebyshev-Markov inequality, we see that \[ \meas \left (
B\right ) \ll \int\limits_{[0,L]^{2}} r^{2}(t_1,t_2)  \, \mbox{d}t_1
\mbox{d}t_2.\] The statement of this lemma follows from the fact
that the volume of each cube is $ \asymp 1/\zeigen $.
\end{proof}

With Lemma \ref{lem:3.7} together with Corollary \ref{col:cov_bound}
it is easy to bound the contribution to \eqref{eq:var=sum cov ij} of
all $(i,j)$ corresponding to singular cubes $S_{ij}$ (i.e.
``singular contribution"), see the proof of Proposition
\ref{prop:approx Kac-Rice} below. Next we will deal with the
nonsingular contribution. Here the Taylor expansion of $K_{2}$ as a
function of $r$ and its scaled derivatives around
$r=r_{1}=r_{2}=r_{12}=0$ (up to the quadratic terms) is valid; it
will yield the following result, whose proof will be given postponed
in \S~\ref{sec:aux lem Tayl proof}.

\begin{lemma}
\label{lem:K2=O(r^2 der)} For $(t_{1},t_{2})$ outside the singular
set we have
\begin{equation}
\label{eq:K2 r2 bnd}
\left|K_{2}(t_{1},t_{2})-K_{1}(t_{1})K_{1}(t_{2}) \right| = E\cdot
O\left( r^{2}+(\frac{r_{1}}{\sqrt{\zeigen}})^2 +
(\frac{r_{2}}{\sqrt{\zeigen}})^{2}+(\frac{r_{12}}{\zeigen})^2
\right).
\end{equation}
\end{lemma}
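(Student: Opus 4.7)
The plan is to rescale so that $K_2$ becomes a smooth function of bounded parameters, recognize that the constant term of its Taylor expansion is exactly $K_1(t_1)K_1(t_2)/\zeigen$, and then bound the remainder quadratically via explicit estimates that circumvent the potential degeneracy in the formula for $\rho$.

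First I would introduce the normalized variables $u := r_1/\sqrt{\zeigen}$, $v := r_2/\sqrt{\zeigen}$, $w := r_{12}/\zeigen$. Cauchy--Schwartz applied to $r_1 = \mathbb{E}[f(t_2)f'(t_1)]$ etc.\ gives $|r_1|,|r_2| \le \sqrt{\alpha}$ and $|r_{12}| \le \alpha$, and since $\alpha = 4\pi^2\zeigen/d$ we have $u^2,v^2,w^2 = O(1)$ uniformly; outside the singular set also $|r|\le 1/2$. Substituting into \eqref{eq:K2_explicit}, \eqref{eq: mu def}, \eqref{eq:rho def} and factoring $\zeigen$ out yields $K_2 = \zeigen\cdot G(r,u,v,w)$, where
\begin{equation*}
G = MC_r\,\Phi(\rho),\ M := \sqrt{(a-u^2)(a-v^2)},\ C_r := \tfrac{1}{\pi^2(1-r^2)^{3/2}},\ a := \tfrac{4\pi^2}{d}(1-r^2),
\end{equation*}
$\Phi(\rho) := \sqrt{1-\rho^2}+\rho\arcsin\rho$, and $\rho = [(1-r^2)w + ruv]/M$. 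A direct evaluation gives $G(0,0,0,0) = 4/d = K_1(t_1)K_1(t_2)/\zeigen$, so the lemma reduces to $|G - 4/d| = O(r^2+u^2+v^2+w^2)$ on the non-singular domain.

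I would then decompose $G - 4/d = (MC_r - 4/d)\Phi(\rho) + (4/d)(\Phi(\rho)-1)$. Rewriting $MC_r = (4/d)(1-r^2)^{-1/2}\sqrt{(1-u^2/a)(1-v^2/a)}$, the elementary bounds $|(1-r^2)^{-1/2}-1| = O(r^2)$ (valid for $|r|\le 1/2$) and $|\sqrt{(1-x)(1-y)}-1| \le x+y$ for $x,y\in[0,1]$ combine to give $|MC_r - 4/d| = O(r^2+u^2+v^2)$; since $|\Phi| \le 1+\pi/4$ on $[-1,1]$, the first summand is within the target bound. For the second summand, the uniform inequality $|\Phi(\rho)-1|\le (\pi/4)\rho^2$ follows from $\Phi(0)=1$, $\Phi'(\rho) = \arcsin\rho$ and $|\arcsin s|\le (\pi/2)|s|$, so it remains only to bound $\rho^2$.

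The main obstacle is that $\rho$ has the possibly-small denominator $M$, so $\rho$ itself is not uniformly Lipschitz in $w$. I would handle it via a case split. If $M\ge \varepsilon_0$ for a fixed small $\varepsilon_0$, then $\rho M = (1-r^2)w+ruv$ together with $|u|,|v| = O(1)$ gives $\rho^2 \le 2(w^2 + r^2 u^2 v^2)/\varepsilon_0^2 = O(r^2+w^2)$. If $M < \varepsilon_0$, then at least one of $a-u^2$, $a-v^2$ is less than $\varepsilon_0$, and since $a \ge 3\pi^2/d$ this forces the corresponding $u^2$ or $v^2$ to be bounded below by a positive absolute constant, so $\rho^2 \le 1 = O(u^2+v^2)$. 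In either case $\rho^2 = O(r^2+u^2+v^2+w^2)$, hence $|\Phi(\rho)-1| = O(r^2+u^2+v^2+w^2)$; combining with the preceding paragraph gives $|G-4/d| = O(r^2+u^2+v^2+w^2)$, which upon multiplication by $\zeigen$ is exactly \eqref{eq:K2 r2 bnd}.
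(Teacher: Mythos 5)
Your proof is correct and follows essentially the same route as the paper: both expand the explicit Kac--Rice formula \eqref{eq:K2_explicit} around $r=r_1=r_2=r_{12}=0$ on the nonsingular set (where $|r|\le 1/2$), using $\bigl|\sqrt{1-\rho^2}+\rho\arcsin\rho-1\bigr|=O(\rho^2)$ together with a quadratic expansion of the prefactor $\mu/(1-r^2)^{3/2}$, and recognizing the constant term as $K_1(t_1)K_1(t_2)=\alpha/\pi^2$. Your case split on the size of the denominator $M$ is in fact a more careful justification than the paper's one-line assertion that the bound on $\rho$ extends to all points ``since $|\rho|\le 1$'' (which as stated glosses over the regime where $M$ degenerates while $r$ and $r_{12}/E$ are small), but it delivers the same estimate $\rho^2=O\left(r^2+(r_1/\sqrt{E})^2+(r_2/\sqrt{E})^2+(r_{12}/E)^2\right)$ that the paper's final substitution actually uses, so the two arguments are substantively identical.
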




We are finally in a position to give a proof to the main result of
this section.

\begin{proof}[Proof of Proposition \ref{prop:approx Kac-Rice}]

First, it is easy to bound the total contribution of the singular
set to \eqref{eq:var=sum cov ij} (i.e. all $i,j$ with $S_{ij}$
singular): Lemma~\ref{lem:3.7} and Corollary \ref{col:cov_bound}
imply that it is bounded by
\begin{equation}
\label{eq:nonsing contr} \sum\limits_{(i,j):\: S_{ij}\subseteq
B}\Cov(\Zc_{i},\Zc_{j})= O\left (\zeigen \cdot
\int\limits_{[0,L]^{2}} r^{2}(t_1,t_2)  \, \mbox{d}t_1 \mbox{d}t_2
\right ).
\end{equation}
Next we deal with the indexes $(i,j)$ corresponding to nonsingular
$S_{ij}$.

We observe that, by the definition, for such a nonsingular cube
$S_{ij}$, necessarily for every $(t_{1},t_{2})\in S_{ij}$,
$$r(t_1,t_2)\ne \pm 1.$$ As this is a sufficient condition for the
application of Kac-Rice formula \eqref{eq:Kac-Rice covar disjoint}
for computation of $\Cov(\Zc_{i},\Zc_{j})$, bearing in mind
\eqref{eq:K2 r2 bnd} it yields that for $S_{ij}$ nonsingular (this
in particular implies $i\ne j$),
\begin{equation*}
\begin{split}
\Cov(\Zc_{i},&\Zc_{j}) = \int\limits_{S_{ij}}(K_{2}(t_{1},t_{2})-K_{1}(t_{1})K_{1}(t_{2})) \, \mbox{d}t_1\mbox{d}t_2 \\
&= O\left( E \cdot \int\limits_{S_{ij}} \left(
r^{2}+(r_{1}/\sqrt{\zeigen})^2 +
(r_{2}/\sqrt{\zeigen})^{2}+(r_{12}/\zeigen)^2 \right) \, \mbox{d}t_1
\mbox{d}t_2   \right).
\end{split}
\end{equation*}
Hence the total contribution of the nonsingular set to
\eqref{eq:var=sum cov ij} is $O(\zeigen \cdot \Rc_{2}(\zeigen))$. As
the total contribution of the singular set to \eqref{eq:var=sum cov
ij} was   bounded in \eqref{eq:nonsing contr}, and obviously
$$\int\limits_{[0,L]^{2}} r^{2}(t_1,t_2)  \, \mbox{d}t_1 \mbox{d}t_2 \le \Rc_{2}(\zeigen),$$
this concludes the proof of Proposition \ref{prop:approx Kac-Rice}.
\end{proof}

\subsection{Proofs of the auxiliary lemmas \ref{lem:diag_cubes}, \ref{lem:ptwise_bnd}
and \ref{lem:K2=O(r^2 der)}}

\label{sec:aux lem Tayl proof}

\begin{proof}[Proof of Lemma \ref{lem:diag_cubes}]
For $ t_1\in [0,L] $ fixed, we compute the Taylor expansion of $
r(t_1,t_2) $ around $ t_2=t_1 $. Recall that
\begin{equation*}
r(t_1,t_2) = r_F\left (\gamma(t_1)-\gamma(t_2)\right ) =
\dfrac{1}{N} \sum_{\mu\in \mathcal{E}}\cos\left (2\pi \left <\mu
,\gamma(t_1)- \gamma(t_2)\right > \right).
\end{equation*}
Thus, $ r(t_1,t_1)=1 $, $ r_2(t_1,t_1)=0 $ and $ r_{22}(t_1,t_1) =
\dot{\gamma}(t_1)^tH_{r_F}(0)\dot{\gamma}(t_1) = -\alpha $. Moreover, we
clearly have $ r_{222}(t_1,t_2) = O(E^{3/2}) $, and therefore \[
r(t_1,t_2) = 1 - \frac{\alpha}{2}(t_2-t_1)^2 + O\left ( \left (
\sqrt{E}(t_2-t_1) \right )^3\right ). \] Hence, for $ t_2-t_1 \ll
1/\sqrt{E} $ we have
\begin{equation} \label{eq:r_asymp}
\begin{split}
1-r^2(t_1,t_2) &= \alpha (t_2-t_1)^2 + O\left ( \left ( \sqrt{E}(t_2-t_1) \right )^3\right ) \\
&= \alpha(t_2-t_1)^2 \left ( 1+ O\left ( \sqrt{E}(t_2-t_1) \right
)\right ),
\end{split}
\end{equation}
so there is a constant $ c_0>0 $ sufficiently small, such that $
1-r^2(t_1,t_2) $ is strictly positive for $ 0<|t_2-t_1|<c_0 /
\sqrt{E} $.
\end{proof}

\begin{proof}[Proof of Lemma \ref{lem:ptwise_bnd}]
The function \begin{equation} \label{eq:G_rho} G(\rho):=\frac{2}{\pi
} \left ( \sqrt{1-\rho^2}+\rho\arcsin{\rho} \right )
\end{equation}  satisfies $ \frac{2}{\pi}\le G \le 1 $. Hence, by the explicit form \eqref{eq:K2_explicit} of the $ 2 $-point correlation function $ K_2 $  we obtain that
\[ K_2(t_1,t_2) \ll \sqrt{\dfrac{\left ( \alpha \left ( 1-r^2\right ) - r_1^2 \right )\left ( \alpha \left ( 1-r^2\right ) - r_2^2 \right )}{\left ( 1- r^2\right )^3}}. \]
For $ t_2 - t_1 \ll 1/\sqrt{E} $ we have
\[ r_1(t_1,t_2) = \alpha (t_2-t_1)\left ( 1+O\left ( \sqrt{E}(t_2-t_1) \right ) \right ), \]
\[ r_2(t_1,t_2) = -\alpha (t_2-t_1)\left ( 1+O\left ( \sqrt{E}(t_2-t_1) \right ) \right ). \] Using \eqref{eq:r_asymp} we get that
\begin{equation*}
\begin{split}
\alpha(1-r^2)-r_1^2 &=  \alpha^2(t_2-t_1)^2 \left ( 1+O\left ( \sqrt{E}(t_2-t_1) \right ) \right ) \\
&- \alpha^2(t_2-t_1)^2 \left ( 1+O\left ( \sqrt{E}(t_2-t_1) \right )
\right ) = O\left ( E^{5/2}(t_2-t_1)^3 \right )
\end{split}
\end{equation*}
and likewise
\[ \alpha(1-r^2)-r_2^2 = O\left ( E^{5/2}(t_2-t_1)^3 \right ), \]
so
\[ K_2(t_1,t_2) \ll \dfrac{O\left ( E^{5/2}(t_2-t_1)^3 \right )}{\alpha^{3/2}(t_2-t_1)^3\left (1+O\left ( \sqrt{E}(t_2-t_1) \right )\right )} = O(E), \]assuming $ 0<|t_2-t_1|<c_0/\sqrt{E} $ for a sufficiently small constant $ c_0 > 0.$
\end{proof}

\begin{proof}[Proof of Lemma \ref{lem:K2=O(r^2 der)}]
Recall from \eqref{eq:K2_explicit} that
\begin{equation}
\label{eq:K2_def_2} K_2(t_1,t_2) = \frac{1}{2\pi}\cdot
\dfrac{1}{\left ( 1- r^2 \right )^{3/2}} \cdot G(\rho) \cdot \mu
\end{equation}
where $ G $, $ \mu $ and $ \rho $ are defined respectively in
\eqref{eq:G_rho}, \eqref{eq: mu def} and \eqref{eq:rho def}. Note
that for every $|\rho|\le 1 $,
\[ G(\rho) = \dfrac{2}{\pi} + O\left ( \rho ^2 \right). \]
For $ r $, $ r_1/\sqrt{E} $, $ r_2/\sqrt{E} $, $ r_{12}/E $ small,
we have
\begin{equation*}
\rho =  O\left ( r + r_{12}/E \right ).
\end{equation*} Moreover, since $ |\rho| \le 1 $, this bound holds for every $ (t_1,t_2) $. Thus, for every $ (t_1,t_2) $ we have
\[ G(\rho) = \dfrac{2}{\pi}+ O \left ( r^2 + \left (r_{12}/E \right )^2 \right ). \] For every $(t_1,t_2)$,
\[ \mu = \alpha + E\cdot O \left (r^2 + (r_1/\sqrt{E})^2 + (r_2/\sqrt{E})^2  \right ), \] and for $ r $ bounded away from $ \pm  1$ we have \[ \dfrac{1}{\left ( 1- r^2\right )^{3/2}} = 1 + O\left (r^2\right ). \] Substituting all the expansions in \eqref{eq:K2_def_2}, we get that
\[ K_2(t_1,t_2) = \dfrac{\alpha}{\pi ^ 2} + E \cdot O\left ( r^{2}+(r_{1}/\sqrt{\zeigen})^2 + (r_{2}/\sqrt{\zeigen})^{2}+(r_{12}/\zeigen)^2 \right ).  \]
The statement of the lemma now follows, recalling that by
\eqref{eq:K_1}, $ K_1(t) = \sqrt{ \alpha}/\pi$ for all $ t\in
[0,L]$.
\end{proof}

\section{Oscillatory integrals and curvature}
\label{sec:oscillary integrals}

In this section we investigate certain oscillatory integrals on
curves which arise in our work. A key role is played by the
differential geometry of the curve.

\subsection{Differential geometry of \texorpdfstring{$3$}{3}-dimensional curves}
For a smooth curve in $\R^3$, with arc-length parameterization
$\gamma:[0,L]\to \curve \subset \R^3$, so that $T(t) = \gamma'(t)$
is the unit tangent, the curvature of $\gamma$ at $\gamma(t)$ is
$\kappa(t) = ||\gamma''(t)||$. We  assume that $\kappa(t)$ never
vanishes, so that $\gamma''(t)=\kappa(t) N(t)$ with $N(t)$ the unit
normal, and under the same assumption the {\em torsion} $\tau(t)$ is
$B'(t) = -\tau(t)N(t)$ where $B = T\times N$ is the binormal vector.
The orthonormal basis $\left (T,N,B\right ) $ is called the
Frenet-Serret frame of the curve. Recall the Frenet-Serret formulas
\begin{equation*}
\begin{matrix}
T'(t)  = &            & \kappa(t) N(t)&  \\ \\
N'(t) =& -\kappa(t)T(t)&              &+\tau(t)B(t)\\   \\
B'(t) = &&-\tau(t) N(t) &
\end{matrix}
\end{equation*}
so in particular
$$
T''\left(t\right)=
\kappa'\left(t\right)N\left(t\right)-\kappa^{2}\left(t\right)T\left(t\right)+\kappa\left(t\right)\tau\left(t\right)B\left(t\right).
$$

Let $ K_{\min}$ and $K_{\max}$ the minimal and the maximal curvature of $\mathcal{C} $ respectively.
Since the curvature is assumed to be nowhere vanishing, we have
$$0<K_{\min}\le\kappa\left(t\right)\le K_{\max}.$$

\subsection{Oscillatory integrals}


Recall the classical form of Van der Corput Lemma: let
$\left[a,b\right]$ be a finite interval, $\phi\in
C^{\infty}\left[a,b\right]$ a smooth and real valued phase function,
and $A\in C^{\infty}\left[a,b\right]$ a smooth amplitude. For
$\lambda>0$ define the oscillatory integral
\[
I\left(\lambda\right):=\int_{a}^{b}A\left(t\right)e^{i\lambda\phi\left(t\right)}\mbox{d}t.
\]

\begin{lemma}
\label{lem:Van_Der_Corp}(Van der Corput) For $k\ge2$, if
$\left|\phi^{\left(k\right)}\right|\ge1$, then, as
$\lambda\rightarrow\infty$,
\[
\left|I\left(\lambda\right)\right|\ll\frac{1}{\lambda^{1/k}}\left(\left\Vert
A\right\Vert _{\infty}+\left\Vert A'\right\Vert _{1}\right).
\]
If $\left|\phi'\right|\ge1$ and $\phi'$ is monotone, then
\[
\left|I\left(\lambda\right)\right|\ll\frac{1}{\lambda}\left(\left\Vert
A\right\Vert _{\infty}+\left\Vert A'\right\Vert _{1}\right).
\]
The implied constants are absolute.\end{lemma}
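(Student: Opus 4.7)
The plan is to decouple the amplitude $A$ from the oscillation via an integration-by-parts reduction, and then prove the oscillation bound for $A\equiv 1$ by induction on $k$.

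\textbf{Reduction to $A\equiv 1$.} Let $F(u):=\int_{a}^{u} e^{i\lambda\phi(t)}\,\mbox{d}t$. Integration by parts gives
$$I(\lambda)=A(b)F(b)-\int_{a}^{b}A'(u)F(u)\,\mbox{d}u.$$
Both hypotheses ($|\phi^{(k)}|\ge 1$, respectively $|\phi'|\ge 1$ with $\phi'$ monotone) are inherited by every subinterval $[a,u]\subseteq[a,b]$, so a uniform estimate $|F(u)|\le M(\lambda)$ yields $|I(\lambda)|\le M(\lambda)(\|A\|_{\infty}+\|A'\|_{1})$. It therefore suffices to prove the stated bounds for the pure exponential integral $J(\lambda):=F(b)$.

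\textbf{The monotone case ($k=1$).} Writing $e^{i\lambda\phi}=(i\lambda\phi')^{-1}\frac{d}{dt}e^{i\lambda\phi}$ and integrating by parts,
$$J(\lambda)=\left[\frac{e^{i\lambda\phi(t)}}{i\lambda\phi'(t)}\right]_{a}^{b}+\frac{1}{i\lambda}\int_{a}^{b}e^{i\lambda\phi(t)}\frac{d}{dt}\!\left(\frac{1}{\phi'(t)}\right)\mbox{d}t.$$
The boundary term is $\le 2/\lambda$ since $|\phi'|\ge 1$. Monotonicity of $\phi'$ forces $1/\phi'$ to be monotone as well, so the total variation telescopes: $\int_{a}^{b}\bigl|\frac{d}{dt}(1/\phi')\bigr|\,\mbox{d}t = |1/\phi'(b)-1/\phi'(a)|\le 2$. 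Hence $|J(\lambda)|\ll 1/\lambda$.

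\textbf{Induction on $k\ge 2$.} Assume the bound $|J(\lambda)|\le C_{k-1}\lambda^{-1/(k-1)}$ whenever $|\phi^{(k-1)}|\ge 1$ (the base case $k-1=1$ needs $\phi'$ monotone, which is automatic when $|\phi''|\ge 1$). Given $|\phi^{(k)}|\ge 1$, the function $\phi^{(k-1)}$ is strictly monotone, so $\{t\in[a,b]:|\phi^{(k-1)}(t)|<\delta\}$ is an interval of length at most $2\delta$, while its complement splits $[a,b]$ into at most two subintervals on which $|\phi^{(k-1)}|\ge \delta$. On the small-derivative interval use the trivial bound ($\le 2\delta$); on each complementary subinterval set $\psi:=\phi/\delta$, which satisfies $|\psi^{(k-1)}|\ge 1$, and apply the inductive hypothesis with parameter $\lambda\delta$ to obtain a contribution $\le C_{k-1}(\lambda\delta)^{-1/(k-1)}$. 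Optimizing $\delta\asymp\lambda^{-1/k}$ balances the two terms and yields $|J(\lambda)|\ll \lambda^{-1/k}$. The main obstacle in this plan is keeping the constant independent of the phase $\phi$ through the rescaling step; this works precisely because the hypothesis is a bare lower bound on a single derivative, which is preserved by the substitution $\psi=\phi/\delta$, so the induction closes cleanly with a constant $C_{k}$ depending only on $k$.
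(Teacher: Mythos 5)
The paper never proves this statement: it is quoted as the classical Van der Corput lemma (with a pointer to the standard literature), so there is no internal proof to compare against. Your argument is the standard textbook proof (essentially Proposition 2 of Ch.\ VIII in Stein's \emph{Harmonic Analysis}): reduce to unit amplitude by writing $I(\lambda)=A(b)F(b)-\int_a^b A'F$, handle the first-derivative case by the substitution $e^{i\lambda\phi}=(i\lambda\phi')^{-1}\frac{d}{dt}e^{i\lambda\phi}$ and integration by parts, and induct on $k$ by excising the set $\{|\phi^{(k-1)}|<\delta\}$ and optimizing $\delta\asymp\lambda^{-1/k}$; the rescaling $\psi=\phi/\delta$, $\lambda\mapsto\lambda\delta$ indeed keeps the constant depending on $k$ alone, which is what ``absolute'' means here. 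The proof is correct; two minor points deserve explicit mention. First, there is a harmless sign slip in your integration-by-parts display (the integral term should carry a minus sign), which does not affect the bound. Second, two facts you use implicitly should be justified in one line each: since $\phi'$ is continuous and $|\phi'|\ge 1$, it has constant sign, which is what makes $1/\phi'$ monotone with total variation $|1/\phi'(b)-1/\phi'(a)|\le 2$; and the set $\{t:|\phi^{(k-1)}(t)|<\delta\}$ is an interval (preimage of $(-\delta,\delta)$ under a strictly monotone continuous function) whose length is at most $2\delta$ by the mean value theorem combined with $|\phi^{(k)}|\ge 1$. With these remarks added, the induction closes exactly as you describe, and your base-case observation (monotonicity of $\phi'$ is automatic when $|\phi''|\ge 1$) is the right way to splice the $k=1$ case into the induction.
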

\begin{remark}
If $\left|\phi'\right|\ge 1$ then, independent of the monotonicity
hypothesis on $\phi'$,
\[
\left|I\left(\lambda\right)\right|\ll\frac{b-a+2}{\lambda}\left(\left\Vert
A\right\Vert _{\infty}+\left\Vert A'\right\Vert _{1}\right).
\]

\end{remark}

\subsection{Curves with nowhere vanishing torsion}
\label{sec:nonzero_torsion} Assume that the curve $ \curve $ has
nowhere vanishing torsion, so that
$0<T_{\min}\le|\tau\left(t\right)|\le T_{\max}$, where $ T_{\min}$ and
$T_{\max}$ are the minimal and maximal absolute value of the torsion of $\mathcal{C}$
respectively. Consider a unit vector $\xi\in  S^2$, and the phase
function
\begin{equation}
\label{eq:phi phase def} \phi_{\xi}\left(t\right):=\left\langle
\xi,\gamma\left(t\right)\right\rangle
\end{equation}
for $t\in \left [0,L\right ]$. We define an oscillatory integral
\[
I\left(\lambda,\xi\right):=\int_{0}^{L}A\left (t\right
)e^{i\lambda\phi_{\xi}\left(t\right)}\mbox{d}t.
\]
 We apply Lemma \ref{lem:Van_Der_Corp}
to give an upper bound (uniform in $\xi$) for
$I\left(\lambda,\xi\right)$:
\begin{proposition}\label{stationary phase 3d}
Let $\curve$ be a smooth curve with nowhere vanishing curvature and
torsion. Then
$$I\left(\lambda,\xi\right)\ll_{\curve}\frac{1}{\lambda^{1/3}}\left ( \left \Vert A\right\Vert_\infty  + \left \Vert A' \right\Vert _1 \right ).$$
\end{proposition}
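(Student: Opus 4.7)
The plan is to apply Van der Corput's lemma after partitioning $[0,L]$ into a uniformly bounded (in $\xi$) number of subintervals, on each of which one of $|\phi'_\xi|,|\phi''_\xi|,|\phi'''_\xi|$ is bounded below by a constant depending only on $\curve$. The existence of such a partition is where the nowhere-vanishing torsion hypothesis is decisive.

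First I would compute the phase derivatives via the Frenet-Serret formulas:
\begin{equation*}
\phi'_\xi(t)=\langle\xi,T(t)\rangle,\quad \phi''_\xi(t)=\kappa(t)\langle\xi,N(t)\rangle,
\end{equation*}
\begin{equation*}
\phi'''_\xi(t)=\kappa'(t)\langle\xi,N(t)\rangle-\kappa(t)^2\langle\xi,T(t)\rangle+\kappa(t)\tau(t)\langle\xi,B(t)\rangle.
\end{equation*}
Writing $(a,b,c)=(\langle\xi,T(t)\rangle,\langle\xi,N(t)\rangle,\langle\xi,B(t)\rangle)$ for the coordinates of $\xi$ in the Frenet-Serret frame, so $a^{2}+b^{2}+c^{2}=1$, these identities give $|a|=|\phi'_\xi|$ and $|b|\le|\phi''_\xi|/K_{\min}$, while solving the third equation for $c$ and using $\kappa\ge K_{\min}>0$, $|\tau|\ge T_{\min}>0$ yields $|c|\ll_{\curve}|\phi'_\xi|+|\phi''_\xi|+|\phi'''_\xi|$. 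Substituting into $a^{2}+b^{2}+c^{2}=1$ produces a constant $\eta=\eta(\curve)>0$ with
\begin{equation*}
\max\bigl(|\phi'_\xi(t)|,\,|\phi''_\xi(t)|,\,|\phi'''_\xi(t)|\bigr)\ge\eta
\end{equation*}
uniformly in $t\in[0,L]$ and $\xi\in S^{2}$.

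Since $|\phi^{(k+1)}_\xi(t)|\le|\gamma^{(k+1)}(t)|$ is uniformly bounded on $[0,L]$, whenever $|\phi^{(k)}_\xi|\ge\eta$ at some $t_0$ it stays $\ge\eta/2$ on an interval of length $\gtrsim_{\curve}1$ around $t_0$. By compactness, cover $[0,L]$ by $N=O_{\curve}(1)$ subintervals $J_1,\dots,J_N$ and assign each an order $k_\ell\in\{1,2,3\}$ with $|\phi^{(k_\ell)}_\xi|\ge\eta/2$ throughout $J_\ell$. A trivial rescaling of $\phi_\xi$ by a constant depending on $\curve$ only puts us in the setting of Lemma~\ref{lem:Van_Der_Corp} when $k_\ell\ge 2$, or of the remark following it (bypassing the monotonicity hypothesis) when $k_\ell=1$, giving in every case
\begin{equation*}
\biggl|\int_{J_\ell}A(t)\,e^{i\lambda\phi_\xi(t)}\,\mbox{d}t\biggr|\ll_{\curve}\lambda^{-1/k_\ell}\bigl(\|A\|_\infty+\|A'\|_{L^1(J_\ell)}\bigr).
\end{equation*}
Summing over $\ell$, the dominant contribution $\lambda^{-1/3}$ from the $k_\ell=3$ pieces delivers the claimed bound.

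The main obstacle is the uniform-in-$\xi$ lower bound on the maximum of the first three derivatives of the phase: without nowhere-vanishing torsion, at a zero $t_0$ of $\tau$ the choice $\xi=B(t_0)$ forces all three of $\phi'_\xi,\phi''_\xi,\phi'''_\xi$ to be simultaneously small in a neighbourhood of $t_0$, and no such $\eta$ could exist. Once this bound is secured, the partition and Van der Corput applications are essentially routine, and the torsion assumption enters only through the $\kappa\tau$ factor that allows the binormal component of $\xi$ to be recovered from the derivatives of the phase.
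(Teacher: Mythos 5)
Your proposal is correct and takes essentially the same route as the paper's proof: the same Frenet--Serret computation of $\phi_\xi',\phi_\xi'',\phi_\xi'''$, the same uniform-in-$\xi$ pointwise lower bound on $\max\left(|\phi_\xi'(t)|,|\phi_\xi''(t)|,|\phi_\xi'''(t)|\right)$ (you get it by solving for the binormal component $\langle\xi,B\rangle$, the paper by an explicit case split with a chosen constant $c$ --- the same estimate), the same propagation to a partition of $[0,L]$ into $O_{\curve}(1)$ subintervals via the uniform bound on $\phi_\xi^{(4)}$, and the same application of Lemma~\ref{lem:Van_Der_Corp} together with the remark following it to bypass monotonicity in the first-derivative case. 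No gaps.
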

\begin{proof}

We have
\begin{equation}
\label{eq:phi'=<xi,T>} \phi'_{\xi}\left(t\right)=\left\langle
\xi,T\left(t\right)\right\rangle,
\end{equation}
\begin{equation}
\label{eq:phi''=k<xi,N>}
\phi_{\xi}''\left(t\right)=\kappa\left(t\right)\left\langle
\xi,N\left(t\right)\right\rangle,
\end{equation}
and
\[
\phi_{\xi}'''\left(t\right)=\kappa'\left(t\right)\left\langle
\xi,N\left(t\right)\right\rangle
-\kappa^{2}\left(t\right)\left\langle
\xi,T\left(t\right)\right\rangle
+\kappa\left(t\right)\tau\left(t\right)\left\langle
\xi,B\left(t\right)\right\rangle .
\]
Since $\left(T,N,B\right)$ is an orthonormal basis for
$\mathbb{R}^{3}$, we know that
\begin{equation}
\label{eq:1=T,N,B decomp}
\begin{split}
1 & =\left|\xi\right|^{2}=\left|\left\langle
\xi,T\left(t\right)\right\rangle \right|^{2}+\left|\left\langle
\xi,N\left(t\right)\right\rangle \right|^{2}+\left|\left\langle
\xi,B\left(t\right)\right\rangle \right|^{2}.
\end{split}
\end{equation}

Now let
\[
c=\min\left(\frac{1}{3},\frac{K_{\min}^{2}T_{\min}^{2}}{48\left\Vert
\kappa'\right\Vert
_{\infty}^{2}},\frac{K_{\min}^{2}T_{\min}^{2}}{48K_{\max}^{4}}\right)
\]
(if $\left\Vert \kappa'\right\Vert _{\infty}=0$ omit the middle
term). If $\left|\left\langle \xi,T\left(t\right)\right\rangle
\right|^{2}\ge c$, then
$\left|\phi_{\xi}'\left(t\right)\right|\ge\sqrt{c}$. If
$\left|\left\langle \xi,N\left(t\right)\right\rangle \right|^{2}\ge
c$, then
$\left|\phi_{\xi}''\left(t\right)\right|\ge\sqrt{c}K_{\min}$.
Otherwise, i.e. if both $\left|\left\langle
\xi,T\left(t\right)\right\rangle \right|^{2}<c$ and
$\left|\left\langle \xi,N\left(t\right)\right\rangle \right|^{2}<c$,
then necessarily $\left|\left\langle
\xi,B\left(t\right)\right\rangle \right|^{2}\ge\frac{1}{3}$, so
\[
\left|\phi_{\xi}'''\left(t\right)\right|\ge\frac{K_{\min}T_{\min}}{\sqrt{3}}-K_{\max}^{2}\sqrt{c}-\left\Vert
\kappa'\right\Vert
_{\infty}\sqrt{c}\ge\frac{K_{\min}T_{\min}}{2\sqrt{3}}.
\]
Note that $\left\Vert \phi_{\xi}'\right\Vert _{\infty}\le1$,
$\left\Vert \phi_{\xi}''\right\Vert _{\infty}\le K_{\max}$,
\[
\left\Vert \phi_{\xi}'''\right\Vert _{\infty}\le\left(\left\Vert
\kappa'\right\Vert
_{\infty}^{2}+K_{\max}^{4}+K_{\max}^{2}T_{\max}^{2}\right)^{1/2}.
\]
Using again the Frenet-Serret formulas, we can also get an upper
bound in the same fashion for the fourth derivative, say
\[
\left\Vert \phi_{\xi}^{\left(4\right)}\right\Vert_\infty \le
C=C\left(K_{\max},T_{\max},\left\Vert \kappa'\right\Vert
_{\infty},\left\Vert \kappa''\right\Vert _{\infty},\left\Vert
\tau'\right\Vert _{\infty}\right).
\]
Assume now that
$\left|\phi_{\xi}'\left(t_{0}\right)\right|\ge\sqrt{c}$ for some
$t_{0}\in\left[0,L\right]$. Then for every $t$ such that
$\left|t-t_{0}\right|\le\frac{\sqrt{c}}{2K_{\max}}$ we have
\[
\sqrt{c}-\left|\phi_{\xi}'\left(t\right)\right|\le\left|\phi_{\xi}'\left(t_{0}\right)\right|-\left|\phi_{\xi}'\left(t\right)\right|\le\left|\phi_{\xi}'\left(t\right)-\phi_{\xi}'\left(t_{0}\right)\right|\le\left|t-t_{0}\right|\left\Vert
\phi_{\xi}''\right\Vert _{\infty}\le\frac{\sqrt{c}}{2}
\]
so $\left|\phi_{\xi}'\left(t\right)\right|\ge\frac{\sqrt{c}}{2}$.
Similarly, if
$\left|\phi_{\xi}''\left(t_{0}\right)\right|\ge\sqrt{c}K_{\min}$ or
$\left|\phi_{\xi}'''\left(t_{0}\right)\right|\ge\frac{K_{\min}T_{\min}}{2\sqrt{3}}$,
then $\phi_{\xi}''$ or $\phi_{\xi}'''$ is bounded away from zero on
some interval around $t_{0}$, with length independent of $\xi$.
Hence the interval $\left[0,L\right]$ may be divided into a finite,
independent of $\xi$, number of subintervals, such that for every
$\xi$ either $\phi_{\xi}'$, or $\phi_{\xi}''$, or $\phi_{\xi}'''$ is
bounded away from zero on each of the subintervals. We conclude the
proof of the proposition by an application of Lemma
\ref{lem:Van_Der_Corp} and the remark following it.
\end{proof}

\subsection{Real analytic curves}

Assume now that $\mathcal{C}$ is a real analytic, non-planar curve
with nowhere zero curvature. Then the torsion of $\mathcal{C}$ has
finitely many zeros, each of them is of finite order. We have
already treated the case when the torsion is nowhere zero. Assume
now, without loss of generality, that there is exactly one point
$t_{0}\in\left[0,L\right]$ with zero torsion of order $m\ge1$,
namely
\begin{equation}\label{eq:torsion_zero}
\tau\left(t_{0}\right)=\dots=\tau^{\left(m-1\right)}\left(t_{0}\right)=0,\hspace{1em}\tau^{\left(m\right)}\left(t_{0}\right)\ne
0.
\end{equation}
Recall that under the notation of the previous section \[
I\left(\lambda,\xi\right):=\int_{0}^{L}A\left (t\right
)e^{i\lambda\phi_{\xi}\left(t\right)}\mbox{d}t.
\]
We prove the following result.
\begin{proposition}
\label{prop:non_plane_analytic} Let $\curve$ be a non-planar real
analytic curve with nowhere zero curvature, which has exactly one
point with zero torsion of order $ m \ge 1 $. Then
$$I\left(\lambda,\xi\right)\ll_{\curve}\frac{1}{\lambda^{1/(m+3)}}\left ( \left \Vert A\right\Vert_\infty  + \left \Vert A' \right\Vert _1 \right ).$$\end{proposition}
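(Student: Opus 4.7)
The plan is to mimic the proof of Proposition~\ref{stationary phase 3d}, with one additional ingredient: when the torsion vanishes at $t_0$, the third derivative $\phi_\xi'''$ no longer dominates the phase for $\xi$ close to the binormal direction, so we must pass to $\phi^{(m+3)}_\xi$. First decompose $[0,L]=U\cup V$, where $U=(t_0-\delta,t_0+\delta)$ is a small neighborhood of $t_0$ (with $\delta$ independent of $\xi$ to be chosen) and $V=[0,L]\setminus U$. On $V$ the torsion is uniformly bounded away from zero, so Proposition~\ref{stationary phase 3d} applies on each connected component of $V$ and yields a contribution of order $\lambda^{-1/3}$ to $I(\lambda,\xi)$, comfortably better than $\lambda^{-1/(m+3)}$.

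To handle $U$, write $\gamma^{(k)}(t)=a_k(t)T(t)+b_k(t)N(t)+c_k(t)B(t)$, so that $\phi_\xi^{(k)}(t)=\langle\xi,\gamma^{(k)}(t)\rangle=a_k\langle\xi,T\rangle+b_k\langle\xi,N\rangle+c_k\langle\xi,B\rangle$. The Frenet--Serret formulas yield the recursion
\begin{equation*}
a_{k+1}=a_k'-\kappa b_k,\quad b_{k+1}=\kappa a_k+b_k'-\tau c_k,\quad c_{k+1}=\tau b_k+c_k',
\end{equation*}
starting from $(a_1,b_1,c_1)=(1,0,0)$, whence $c_3=\kappa\tau$. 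Assigning to each monomial in $c_k$ the $\tau$-weight $\sum_{i}(j_i+1)$ over its factors $\tau^{(j_i)}$, an induction using the above recursion shows that every monomial in $c_k$ has at least one $\tau$-derivative factor and total $\tau$-weight at most $k-2$. Under the vanishing condition $\tau^{(j)}(t_0)=0$ for $j<m$, a monomial in $c_{m+3}$ can be nonzero at $t_0$ only when each of its $\tau$-factors has order $\geq m$, thus contributing weight $\geq m+1$; since the total weight is at most $m+1$, there can be exactly one such factor, necessarily $\tau^{(m)}$. Tracking the coefficient through the recursion (the highest-order single-$\tau$-factor term in $c_k$ is always $\kappa\cdot\tau^{(k-3)}$, inherited from $c_{k-1}'$) gives $c_{m+3}(t_0)=\kappa(t_0)\tau^{(m)}(t_0)\neq 0$.

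Finally, trichotomize on $\xi\in S^{2}$ via $|\langle\xi,T(t_0)\rangle|^2+|\langle\xi,N(t_0)\rangle|^2+|\langle\xi,B(t_0)\rangle|^2=1$, exactly as in Proposition~\ref{stationary phase 3d}. For a small $c>0$, if $|\langle\xi,T(t_0)\rangle|^2\geq c$ or $|\langle\xi,N(t_0)\rangle|^2\geq c$, then by continuity $\phi_\xi'$ or $\phi_\xi''$ is bounded away from zero on a $\xi$-uniform sub-neighborhood of $t_0$, yielding $\lambda^{-1}$ or $\lambda^{-1/2}$ respectively via Lemma~\ref{lem:Van_Der_Corp}. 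Otherwise $|\langle\xi,B(t_0)\rangle|^2\geq 1-2c$; choosing $c$ small enough that the $a_{m+3}(t_0)\langle\xi,T(t_0)\rangle+b_{m+3}(t_0)\langle\xi,N(t_0)\rangle$ contributions cannot cancel the dominant $c_{m+3}(t_0)\langle\xi,B(t_0)\rangle$ term, we obtain $|\phi^{(m+3)}_\xi|\gtrsim|\kappa(t_0)\tau^{(m)}(t_0)|$ on a $\xi$-uniform sub-neighborhood of $t_0$, and Lemma~\ref{lem:Van_Der_Corp} with $k=m+3$ delivers $\lambda^{-1/(m+3)}$. The main obstacle is the algebraic identity $c_{m+3}(t_0)=\kappa(t_0)\tau^{(m)}(t_0)$, which requires the weight bookkeeping sketched above; the remainder of the argument is a direct transcription of the proof of Proposition~\ref{stationary phase 3d}.
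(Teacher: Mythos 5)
Your proposal is correct and takes essentially the same route as the paper's proof: isolate a $\xi$-independent neighborhood of $t_0$, show via Frenet--Serret that the $\left\langle \xi,B(t_0)\right\rangle$-component of $\phi_{\xi}^{(m+3)}(t_0)$ is $\kappa(t_0)\tau^{(m)}(t_0)\neq 0$, run the same $T,N,B$ trichotomy with a small constant $c$, handle the complement (where the torsion is bounded away from zero) exactly as in Proposition~\ref{stationary phase 3d}, and conclude by Lemma~\ref{lem:Van_Der_Corp} on finitely many $\xi$-independent subintervals. The only difference is one of detail: you prove the key identity $c_{m+3}(t_0)=\kappa(t_0)\tau^{(m)}(t_0)$ by an explicit (and correct) weight-counting induction, whereas the paper simply asserts the corresponding expansion with $P,Q$ polynomials in $\kappa,\kappa',\dots,\kappa^{(m+1)}$.
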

\begin{proof}
Using the Frenet-Serret formulas and (\ref{eq:torsion_zero}), we get
that
\begin{alignat*}{1}
\phi_{\xi}^{\left(m+3\right)}\left(t_{0}\right) & =P\left(t_{0}\right)\left\langle \xi,T\left(t_{0}\right)\right\rangle +Q\left(t_{0}\right)\left\langle \xi,N\left(t_{0}\right)\right\rangle \\
 & +\kappa\left(t_{0}\right)\tau^{\left(m\right)}\left(t_{0}\right)\left\langle \xi,B\left(t_{0}\right)\right\rangle
\end{alignat*}
where $P,Q$ are polynomials in
$\kappa,\kappa',\dots,\kappa^{\left(m+1\right)}$.

Choose
\[
c=\min\left(\frac{1}{3},\frac{\kappa\left(t_{0}\right)^{2}\left(\tau^{\left(m\right)}\left(t_{0}\right)\right)^{2}}{48P\left(t_{0}\right)^{2}},\frac{\kappa\left(t_{0}\right)^{2}\left(\tau^{\left(m\right)}\left(t_{0}\right)\right)^{2}}{48Q\left(t_{0}\right)^{2}}\right)
\]
(if $P\left(t_{0}\right)=0$ or $Q\left(t_{0}\right)=0$, omit the
corresponding terms). As in the proof of Proposition \ref{stationary
phase 3d}, by the orthonormality of $\left(T,N,B\right)$, either
$\left|\phi_{\xi}'\left(t_{0}\right)\right|\ge\sqrt{c}$,
$\left|\phi_{\xi}''\left(t_{0}\right)\right|\ge\kappa\left(t_{0}\right)\sqrt{c}$,
or (if both
$\phi_{\xi}'\left(t_{0}\right),\phi_{\xi}''\left(t_{0}\right)$ are
small) $\left|\left\langle \xi,B\left(t_{0}\right)\right\rangle
\right|^{2}\ge\frac{1}{3}$. Hence
\begin{alignat*}{1}
\left|\phi_{\xi}^{\left(m+3\right)}\left(t_{0}\right)\right| & \ge\frac{1}{\sqrt{3}}\kappa\left(t_{0}\right)|\tau^{\left(m\right)}\left(t_{0}\right)|-P\left(t_{0}\right)\sqrt{c}-Q\left(t_{0}\right)\sqrt{c}\\
 & \ge\frac{1}{2\sqrt{3}}\kappa\left(t_{0}\right)|\tau^{\left(m\right)}\left(t_{0}\right)|.
\end{alignat*}
Since all the derivatives of $\phi_{\xi}$ are bounded from above,
uniformly w.r.t. $\xi$, we conclude that either the first, the
second or the $(m+3)$-th derivative of $\phi_{\xi}$ is bounded away
from zero on an interval around $t_{0}$ of length independent of
$\xi$. Outside that interval the torsion doesn't vanish, so that in
a neighborhood (of length independent of $\xi)$ around any point
outside this interval, either the first, or the second, or the third
derivative of $\phi_{\xi}$ is bounded away from zero. Dividing the
interval $\left[0,L\right]$ to a finite number (independent of
$\xi$) of subintervals, and applying Lemma \ref{lem:Van_Der_Corp} to
each of the subintervals, we finally deduce the statement of
Proposition \ref{prop:non_plane_analytic}.
\end{proof}

\section{Background on sums of three squares}
\label{sec:sums of 3 squares}

A positive integer $\zeigen$ is a sum of three squares if and only
if $\zeigen\neq 4^a(8b+7)$. Let $ \vE(\zeigen)$ be the set of
solutions
\begin{equation*}
 \vE(\zeigen) = \{\vec x\in \Z^3: |\vec x|^2=\zeigen\}
\end{equation*}
and denote by $N=N_\zeigen$ the number of solutions
\begin{equation*}
N= N_\zeigen:=\#\vE(\zeigen) \;.
\end{equation*}
Gauss' formula expresses $N_\zeigen$ in terms of class numbers. For
$\zeigen$ square-free, it says that
\begin{equation*}\label{Gauss formula}
\#\vE(\zeigen) = \frac{24
h(d_\zeigen)}{w_\zeigen}\left(1-\leg{d_\zeigen}{2}\right)
\end{equation*}
where $d_\zeigen$, $h(d_\zeigen$) and $w_\zeigen$ are the
discriminant, class number and the number of units in the quadratic
field $\Q(\sqrt{-\zeigen})$. Using Dirichlet's class number formula,
one may then express $\# \vE(\zeigen)$ by means of the special value
$L(1,\chi_{d_\zeigen})$ of the associated quadratic $L$-function: If
$\zeigen\not\equiv 7\bmod 8$ is square-free then
\begin{equation*}\label{N in terms of L(1,chi)}
 N_\zeigen=c_\zeigen\sqrt{\zeigen}\cdot L(1,\chi_{d_\zeigen})
\end{equation*}
where $c_\zeigen$ only depends on the remainder of $\zeigen$ modulo
$8$. We may bound the number $\#\vE(\zeigen)$ of such points as
\begin{equation*}
 \#\vE(\zeigen)\ll\ \zeigen^{1/2+\epsilon}
\end{equation*}
for all $\epsilon>0$.

The existence of a {\em primitive} lattice point (i.e. $\vec
x=(x_1,x_2,x_3)$ with $\gcd(x_1,x_2,x_3)=1$ and $\|x\|^{2}=\zeigen$)
is equivalent to $\zeigen\not\equiv 0, 4,7 \bmod 8$. If it is indeed
the case, then Siegel's theorem yields a lower bound
\begin{equation}\label{Siegel thm}
 \#\vE(\zeigen)\gg \zeigen^{1/2-\epsilon}.
\end{equation}
A fundamental result conjectured by Linnik (established by himself
under the Generalized Riemann Hypothesis), is that for
$\zeigen\not\equiv 0,4,7 \bmod 8$, the points
\begin{equation*}
 \^\vE(\zeigen):= \frac 1{\sqrt{\zeigen}}\vE(\zeigen) \subset S^2
\end{equation*}
obtained by projecting to the unit sphere, become equidistributed on
the unit sphere with respect to the normalized Lebesgue measure as
$\zeigen\to \infty$. This was proved unconditionally by Duke
\cite{Duke}, and Golubeva and Fomenko \cite{GF}.

The ``Riesz $s$-energy" of $N$ points $x_1,\dots, x_N$ on $S^2$ is
defined as
\begin{equation}\label{Def of E_s}
   E_s(x_1,\dots, x_N):=  \sum_{i\neq j} \frac 1{|x_i-x_j|^s}.
\end{equation}
A forthcoming result of Bourgain, Rudnick and Sarnak \cite{BRSsphere} (announced in \cite{BRS} for the electrostatic case $s=1$)
yields a precise asymptotic expression for $E_s(\^\vE(\zeigen))$: for every $0<s<2$ if $\zeigen\to \infty$ such that $\zeigen\neq 0,4,7 \mod 8$,
then there exists some $\delta>0$ so that
\begin{equation}
\label{eq:extremal energy}
E_s(\^\vE(\zeigen)) = I(s) N^2 +O(N^{2-\delta})
\end{equation}
with
\begin{equation*}
I(s)=\int_{S^2} \frac 1{|x - x_0|^s} d\sigma(x) = \frac {2^{1-s}}{2-s},
\end{equation*}
with $x_0\in S^2$ any point on the sphere, and $d\sigma$ the Lebesgue measure, normalized to have unit area

As the details of \eqref{eq:extremal energy} have not appeared at the time of writing, we will prove the following simple bound, which suffices for Theorem \ref{thmvar}:
\begin{proposition}
\label{prop:bd on Riesz2}
Fix $0<s\leq 1$. Then for $\zeigen\not\equiv 0, 4,7 \bmod 8$,
\begin{equation*}
 E_s(\^\vE(\zeigen))  \ll N^2\zeigen^\eta,\quad \forall \eta>0\;.
\end{equation*}
\end{proposition}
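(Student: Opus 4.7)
My plan is to reduce the energy estimate to a near-pair counting problem on $\mathcal{E}(E)$ and then attack the latter via the geometry of orthogonal lattice decompositions together with arithmetic equidistribution.

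\textbf{Reduction step.} Group pairs by the integer $k=|\mu_1-\mu_2|^2\in\{2,4,\ldots,4E\}$ and let $n(k):=\#\{(\mu_1,\mu_2)\in\mathcal{E}(E)^2:|\mu_1-\mu_2|^2=k\}$, $M(K):=\sum_{k\le K}n(k)$. Since $|x_{\mu_1}-x_{\mu_2}|=|\mu_1-\mu_2|/\sqrt{E}$ on the unit sphere, $E_s(\widehat{\mathcal{E}}(E))=E^{s/2}\sum_{k} n(k) k^{-s/2}$. Abel summation yields
\[
E_s\ll E^{s/2}\left(\frac{N^2}{(4E)^{s/2}}+\int_2^{4E}M(K)K^{-s/2-1}\,dK\right),
\]
so it suffices to prove the uniform bound $M(K)\ll K\cdot E^{\eta}$ for every $K\in[2,4E]$: then the integral is $\ll E^{1-s/2+\eta}$ for $s<2$, giving $E_s\ll N^2+E^{1+\eta}\ll N^2 E^{O(\eta)}$ by Siegel's lower bound $N\gg E^{1/2-o(1)}$.

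\textbf{Near-pair parametrization.} Use the bijection $(\mu_1,\mu_2)\leftrightarrow(\alpha,v):=(\mu_1+\mu_2,\mu_1-\mu_2)$: pairs with $|\mu_1-\mu_2|^2\le K$ correspond to $(\alpha,v)\in(\Z^3)^2$ satisfying $|\alpha|^2+|v|^2=4E$, $\langle\alpha,v\rangle=0$, $\alpha\equiv v\pmod 2$ and $|v|^2\le K$. For fixed $\alpha$ with $|\alpha|^2\in[4E-K,4E]$, the admissible $v$'s lie in the rank-two sublattice $\alpha^\perp\cap\Z^3$ (after a parity shift) on a circle of radius-squared $4E-|\alpha|^2\le K$. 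A 2D Gauss-circle count in this sublattice (of covolume $\asymp\sqrt{E}$ generically) gives $\ll K/\sqrt{E}+1$ total admissible $v$'s per $\alpha$, while the divisor bound on individual circles gives $O(E^{\eta})$ on each fibre.

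\textbf{Main obstacle.} The shell count $\sum_{4E-K\le m\le 4E}r_3(m)\asymp K\sqrt{E}$ (from the average order $r_3(m)\asymp\sqrt{m}$) times $K/\sqrt{E}+1$ only yields $M(K)\ll (K^2+K\sqrt{E})E^{\eta}$, off by a factor of $\sqrt{E}$ from the target for small $K$. To recover this $\sqrt{E}$ one must exploit the joint arithmetic structure of the constraints: the compatibility $\langle\alpha,v\rangle=0$ together with $\alpha\equiv v\pmod 2$ should eliminate a density-$\asymp 1/\sqrt{E}$ fraction of the shell $\alpha$'s. The cleanest implementation I see is via Duke's effective equidistribution of $\widehat{\mathcal{E}}(E')$ applied to a smoothly truncated Riesz kernel, expanded by Funk--Hecke into spherical harmonics: the zero-mode reproduces the $I(s)N^2$ main term, and the nonzero modes receive a power saving from subconvexity bounds on $L(1,\chi_{d_{E'}})$. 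This arithmetic step---rather than the underlying geometry---is the real technical heart of the proof.
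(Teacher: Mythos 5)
Your reduction via Abel summation is sound, but the intermediate target you set, $M(K)\ll K\,E^{\eta}$ uniformly in $2\le K\le 4E$, is both stronger than the proposition requires and never established: as you yourself note, your sum-and-difference parametrization is off by a factor of $\sqrt{E}$, and the repair you sketch (Duke's effective equidistribution applied to a truncated Riesz kernel via Funk--Hecke) cannot deliver it. Subconvexity-based equidistribution saves a fixed power $E^{-\delta_0}$ per spherical harmonic, so after summing the many modes needed to resolve a kernel truncated at small angular scale, one controls only caps of radius above some fixed power of $E$; the near-diagonal pairs, which are exactly what $M(K)$ measures at intermediate $K$, remain untouched. Concretely, at $K=E^{1/2}$ your target demands $M(K)\ll E^{1/2+\eta}$, whereas the available elementary counts (divisor bounds on circles, or cap counting) give only $E^{3/4+\eta}$; closing that gap is small-scale pair statistics at the level of the Bourgain--Rudnick--Sarnak asymptotic \eqref{eq:extremal energy}, which is precisely the deep input the paper says has not yet appeared and which this proposition is designed to circumvent. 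So as written, the proposal does not constitute a proof.

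The paper's actual argument is much softer and in fact closes your own skeleton. It decomposes the off-diagonal sum dyadically in $|\mu-\nu|$ and invokes the cap bound of Bourgain--Rudnick \cite{BRGAFA}: the number of points of $\mathcal{E}(E)$ within distance $\rho$ of a fixed one is $O\left(E^{\eta}(1+\rho)\right)$. In your notation this yields only $M(K)\ll N(1+\sqrt{K})E^{\eta}$, weaker than your target by roughly $\sqrt{E/K}$, but it suffices: feeding it into your Abel summation gives $\sum_{\mu\ne\nu}|\mu-\nu|^{-s}\ll N E^{(1-s)/2+\eta}\log E$ for $0<s\le 1$ (the logarithm only being needed at $s=1$), and the apparent deficit of one factor of $N$ is recovered from Siegel's lower bound \eqref{Siegel thm}, $N\gg E^{1/2-\delta}$, valid since $E\not\equiv 0,4,7\bmod 8$ guarantees primitive lattice points; this gives $\ll N^{2}E^{-s/2+\eta'}$, and multiplying by the normalization $E^{s/2}$ yields $E_s(\widehat{\mathcal{E}}(E))\ll N^{2}E^{\eta'}$. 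The moral is that because the right-hand side is $N^{2}E^{\eta}$ rather than an asymptotic with main term $I(s)N^{2}$, one may spend Siegel's theorem to trade a factor $\sqrt{E}$ for a factor $N$: the arithmetic heart of this proposition is only the cap count plus Siegel, and no equidistribution input --- Duke or otherwise --- is needed.
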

The proof of Proposition~\ref{prop:bd on Riesz2} will be given in Appendix~\ref{sec:Appendix}.


\section{The second moment of \texorpdfstring{$r$}{r} and its derivatives}
\label{sec:second_moment}

We wish to bound the second moment of the covariance function $r$
and its derivatives. It is here that we need the full arithmetic
input described in \S~\ref{sec:sums of 3 squares}. Recall that
\begin{equation}
\begin{split} \label{eq:cov_def_2}
r(t_1,t_2) &= r_F\left (\gamma(t_1)-\gamma(t_2)\right ) = \dfrac{1}{N} \sum_{\mu\in \mathcal{E}}\cos\left (2\pi \left <\mu ,\gamma(t_1)- \gamma(t_2)\right >\right ) \\
&= \dfrac{1}{N} \sum_{\mu\in \mathcal{E}}e\left (\left <\mu
,\gamma(t_1)- \gamma(t_2)\right >\right ).
\end{split}
\end{equation}

\subsection{Non-planar curves}

Recall that given $\zeigen$ we defined $\Rc_{2}(\zeigen)$ as in
\eqref{eq:r_2}. Proposition \ref{prop:approx Kac-Rice} shows that in
order to bound the nodal intersections variance from above it is
sufficient to bound $\Rc_{2}(\zeigen)$, which is claimed in the
following proposition for the non-planar case.

\begin{proposition}
\label{prop:R2<<1/E^1/3} Assume that the curve $\curve$ is smooth,
with nowhere zero curvature and torsion. Then for every $\eta>0$ we have
\begin{equation}
\label{eq:Rc2<<1/E^1/3}
\Rc_2(\zeigen) \ll \dfrac{1}{E^{1/3-\eta}}.
\end{equation}
\end{proposition}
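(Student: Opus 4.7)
The plan is to expand each of the four integrands making up $\Rc_{2}(\zeigen)$ as a double Fourier sum over $\vE(\zeigen)\times\vE(\zeigen)$, reduce to oscillatory integrals along $\curve$ with phase $\langle \mu+\mu',\gamma(t)\rangle$, and combine Proposition \ref{stationary phase 3d} (the $\lambda^{-1/3}$ van der Corput bound available because the torsion is nowhere zero) with the Riesz energy bound from Proposition \ref{prop:bd on Riesz2}. Differentiating \eqref{eq:cov_def_2} gives
\[
r_{1}(t_{1},t_{2})=\frac{2\pi i}{N}\sum_{\mu\in\vE}\langle \mu,\dot\gamma(t_{1})\rangle\,e(\langle \mu,\gamma(t_{1})-\gamma(t_{2})\rangle),
\]
and analogously for $r_{2}$ and $r_{12}$, the latter carrying an additional factor $\langle \mu,\dot\gamma(t_{2})\rangle$. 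Upon squaring and integrating over $[0,L]^{2}$, the $t_{1}$- and $t_{2}$-integrals separate, and each of the four terms in $\Rc_{2}(\zeigen)$ takes the shape
\[
\frac{c}{N^{2}}\sum_{\mu,\mu'\in\vE} J_{B_{1}}(\mu+\mu')\cdot J_{B_{2}}(-(\mu+\mu')),\qquad J_{B}(\xi):=\int_{0}^{L} B(t)\,e(\langle \xi,\gamma(t)\rangle)\,\mathrm{d}t,
\]
where each amplitude $B_{i}$ is a product of at most two factors of the form $\langle \mu,\dot\gamma(\cdot)\rangle/\sqrt{\zeigen}$ or $\langle \mu',\dot\gamma(\cdot)\rangle/\sqrt{\zeigen}$. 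The normalizations $r_{1}/\sqrt{\zeigen}$, $r_{2}/\sqrt{\zeigen}$, $r_{12}/\zeigen$ built into \eqref{eq:r_2} exactly absorb the $\sqrt{\zeigen}$ gained from each differentiation, so $\|B_{i}\|_{\infty}\ll 1$ uniformly in $\mu,\mu'$; since $\ddot\gamma=\kappa N$ is bounded, the same reasoning yields $\|B_{i}'\|_{1}\ll 1$ uniformly.

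I would then split each double sum into the diagonal part $\mu'=-\mu$ and the off-diagonal part $\mu+\mu'\ne 0$. On the diagonal the phase trivializes, each factor is bounded by $\|B_{i}\|_{\infty}L=O(L)$, and the total diagonal contribution per piece is $O(L^{2}/N)$, which by Siegel's bound \eqref{Siegel thm} is $O(\zeigen^{-1/2+\epsilon})$, well inside \eqref{eq:Rc2<<1/E^1/3}. On the off-diagonal, Proposition \ref{stationary phase 3d} applied with $\lambda=2\pi|\mu+\mu'|$ and direction $\xi=(\mu+\mu')/|\mu+\mu'|\in S^{2}$ gives $|J_{B_{i}}(\mu+\mu')|\ll_{\curve}|\mu+\mu'|^{-1/3}$, so the off-diagonal contribution to each piece is bounded by
\[
\frac{1}{N^{2}}\sum_{\mu+\mu'\ne 0}\frac{1}{|\mu+\mu'|^{2/3}}=\frac{1}{N^{2}\zeigen^{1/3}}\sum_{\substack{\hat\mu,\hat{\mu'}\in\^\vE(\zeigen)\\ \hat\mu+\hat{\mu'}\ne 0}}\frac{1}{|\hat\mu+\hat{\mu'}|^{2/3}},
\]
where $\hat\mu=\mu/\sqrt{\zeigen}$. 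Replacing $\hat\mu\mapsto -\hat\mu$, which preserves $\^\vE(\zeigen)$, the last sum becomes exactly the Riesz energy $E_{2/3}(\^\vE(\zeigen))$, and Proposition \ref{prop:bd on Riesz2} (applicable since $2/3\in(0,1]$) bounds it by $O(N^{2}\zeigen^{\eta})$. Summing the four pieces yields $\Rc_{2}(\zeigen)=O(\zeigen^{-1/3+\eta})$.

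The main obstacle I anticipate is the bookkeeping in the first paragraph, namely verifying that the $\sqrt{\zeigen}$ powers gained from each differentiation of $r$ are exactly compensated by the normalizations built into \eqref{eq:r_2}, so that the amplitudes $B_{i}$ entering the oscillatory integrals $J_{B_{i}}$ have $C^{1}$-norms bounded independently of $\mu,\mu'$ and $\zeigen$; this is what allows Proposition \ref{stationary phase 3d} to be applied with zero loss in $\zeigen$ on the off-diagonal. After that the scheme is mechanical: the arithmetic enters only through Siegel's lower bound on $N$ (diagonal) and through the Riesz energy bound of Proposition \ref{prop:bd on Riesz2} (off-diagonal), and the exponent $1/3$ in \eqref{eq:Rc2<<1/E^1/3} is traced directly to the $\lambda^{-1/3}$ gain in Proposition \ref{stationary phase 3d} for curves with nowhere vanishing torsion.
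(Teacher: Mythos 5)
Your proposal is correct and takes essentially the same approach as the paper: expand $r$ and its normalized derivatives into a double Fourier sum over $\vE(\zeigen)\times\vE(\zeigen)$, isolate the diagonal (main term $O(L^{2}/N)$, negligible by Siegel's lower bound), and bound the off-diagonal oscillatory integrals by Proposition~\ref{stationary phase 3d} before summing via the Riesz $2/3$-energy estimate of Proposition~\ref{prop:bd on Riesz2}. The only cosmetic differences are your use of $\mu+\mu'$ where the paper writes $\mu-\mu'$ (equivalent, since $\vE(\zeigen)$ is symmetric under $\mu\mapsto-\mu$) and your bounding of the $r_{1}$-diagonal by $O(L^{2}/N)$ where the paper evaluates it exactly as $L^{2}/(dN)$.
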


\begin{remark}
For real-analytic non-planar curves with non-vanishing curvature,
the same argument as below, invoking Proposition
\ref{prop:non_plane_analytic} instead of Proposition \ref{stationary
phase 3d}, yields
\begin{equation}
\label{eq:Rc2<<1/E^delta analytic} \Rc_2 (\zeigen)\ll 1/E^\delta
\end{equation}
for some $\delta=\delta(\curve)>0$.
\end{remark}

\begin{proof}

In what follows we will establish the following bounds on the $2$nd
moment of $r$ and some of its normalized derivatives along $\curve$: for all $\eta>0$
we have
\begin{equation}
\label{eq:cov_equation}
\iint \limits_{[0,L]^2}
r(t_1,t_2)^2\mbox{d}t_1\mbox{d}t_2  \ll \frac 1{\zeigen^{1/3-\eta}},
\end{equation}
\begin{equation}\label{eq:non_planar_ri}
\iint \limits_{[0,L]^2} \left
(r_i(t_1,t_2)/\sqrt{E}\right )^2\mbox{d}t_1\mbox{d}t_2 \ll \frac
1{\zeigen^{1/3-\eta}} \hspace{10pt} (i=1,2),
\end{equation}
and
\begin{equation}
\label{eq:non_planar_r12} \iint \limits_{[0,L]^2} \left
(r_{12}(t_1,t_2)/E\right )^2\mbox{d}t_1\mbox{d}t_2  \ll \frac
1{\zeigen^{1/3-\eta}}.
\end{equation}
The statement \eqref{eq:Rc2<<1/E^1/3} of Proposition
\ref{prop:R2<<1/E^1/3} will follow at once upon substituting
\eqref{eq:cov_equation}, \eqref{eq:non_planar_ri} and
\eqref{eq:non_planar_r12} into the definition \eqref{eq:r_2} of
$\Rc_{2}(\zeigen)$.

First we show \eqref{eq:cov_equation}. Squaring out and integrating
\eqref{eq:cov_def_2}, we find
\begin{equation*}
\begin{split}
\iint \limits_{[0,L]^2} r(t_1,t_2)^2 \, \mbox{d}t_1\mbox{d}t_2 &=
\frac 1{N^2}\sum_{\mu\in \mathcal{E}} \sum_{\mu'\in \mathcal{E}} \,
\, \iint \limits_{[0,L]^2} e(\langle
\mu-\mu',\gamma(t_1)-\gamma(t_2) \rangle) \, \mbox{d}t_1 \mbox{d}t_2 \\
&= \frac{L^2}{N} + \frac 1{N^2} \sum_{ \mu \neq \mu'} \left|
\int_0^L e(\langle\mu-\mu',\gamma(t) \rangle) \, \mbox{d}t
\right|^2.
\end{split}
\end{equation*}
Since $\gamma$ has nowhere vanishing curvature and torsion, we
deduce from Proposition~\ref{stationary phase 3d} that
\begin{equation*}
 \int_0^L e(\langle\mu-\mu',\gamma(t) \rangle) \, \mbox{d}t \ll \frac 1{|\mu-\mu'|^{1/3}},
 \end{equation*}
which yields
\begin{equation*}
\iint \limits_{[0,L]^2} r(t_1,t_2)^2 \, \mbox{d}t_1\mbox{d}t_2 =
\frac {L^2}{N} + O\left(\frac 1{N^2} \sum_{\mu\neq \mu'} \frac
1{|\mu-\mu'|^{2/3}}\right).
\end{equation*}
The summation inside the error term $O(\cdots)$ is $1/E^{1/3}$ times
the ``Riesz $2/3$-energy" of the set of projected lattice points
$\^\vE(E)=\frac 1{\sqrt{E} }\vE(E)\subset S^2$. By
Proposition \ref{prop:bd on Riesz2},
\begin{equation*}
\sum_{ \substack{\^\mu,\^\mu' \in \^\vE\\ \^\mu\neq \^\mu'}} \frac
1{|\^\mu-\^\mu'|^{2/3}} \ll  N^2\cdot E^{\eta},\forall\eta>0,
\end{equation*}
and hence
\begin{equation*}
\frac 1{N^2} \sum_{\mu\neq \mu'} \frac 1{|\mu-\mu'|^{2/3}} = \frac
1{\zeigen^{1/3}} \frac 1{N^2}\sum_{ \substack{\^\mu,\^\mu' \in \^\vE\\
\^\mu\neq \^\mu'}} \frac 1{|\^\mu-\^\mu'|^{2/3}}\ll \frac{1}{\zeigen^{1/3-\eta}},
\end{equation*}
which yields \eqref{eq:cov_equation}.

Now we turn to proving \eqref{eq:non_planar_ri}. We have
\[ \dfrac{1}{2\pi i \sqrt{E}} r_1(t_1,t_2) = \dfrac{1}{N} \sum_{\mu} \left < \dfrac{\mu}{\left \vert \mu \right \vert}, \dot{\gamma}(t_1)  \right> e^{2\pi i \left < \mu , \gamma (t_1) - \gamma (t_2)  \right > }. \]
Denote
\[ A_{\mu,\mu'}(t) = \left< \dfrac{\mu}{\left \vert \mu \right \vert}, \dot{\gamma}(t)\right> \left< \dfrac{\mu'}{\left \vert \mu' \right \vert}, \dot{\gamma}(t)\right>. \]
Then
\begin{equation} \label{eq:ri_expansion}
\begin{split}
\iint \limits_{[0,L]^2} & \left \vert \dfrac{1}{2\pi  \sqrt{E}}
r_1(t_1,t_2) \, \mbox{d}t_1\mbox{d}t_2 \right\vert ^2 =
\dfrac{L}{N^2} \sum_{\mu} \int_0^L A_{\mu,\mu}(t_1) \, \mbox{d}t_1 \\
&+ \dfrac{1}{N^2}\sum_{\substack{\mu,\mu'\in\vE \\
\mu\ne\mu'}}\int_0^L A_{\mu,\mu'}(t_1)e^{2\pi i \left
<\mu-\mu',\gamma(t_1)\right >} \, \mbox{d}t_1 \int_0^L e^{2\pi i
\left <\mu' - \mu,\gamma(t_2)\right >} \, \mbox{d}t_2.
\end{split}
\end{equation}
To evaluate the main term, we use the fact (see \cite[Lemma
2.3]{RW}) that for every $ v\in \mathbb R ^ d $, \[
\dfrac{1}{N}\sum_{\mu\in\vE}\left <\mu,v\right >^2 =
\dfrac{\zeigen}{d} \left \Vert v \right\Vert ^2.
\] Hence, \[ \dfrac{L}{N^2} \sum_{\mu} \int_0^L A_{\mu,\mu}(t_1)\,
\mbox{d}t_1 =  \dfrac{L^2}{dN}.  \] As for the off-diagonal terms,
note that $ \left\Vert A_{\mu,\mu'} \right\Vert_\infty  \le 1$, $
\left\Vert A'_{\mu,\mu'} \right\Vert_\infty  \le 2K_{\max}$, so by
Proposition \ref{stationary phase 3d} each of the two last integrals
in \eqref{eq:ri_expansion} is bounded above by $
1/|\mu-\mu'|^{1/3}$. From here we continue as in the proof of
\eqref{eq:cov_equation} to obtain the estimate
\[ \iint \limits_{[0,L]^2} \left (r_1(t_1,t_2)/\sqrt{E}\right )^2 \, \mbox{d}t_1\mbox{d}t_2 \ll \frac 1{\zeigen^{1/3-\eta}},\,\forall\eta>0 \]
and a similar proof yields the same bound for the (normalized)
second moment of $ r_2 $.

Finally we turn to proving \eqref{eq:non_planar_r12}. We have
\begin{equation*}
\begin{split}
\iint \limits_{[0,L]^2} & \left \vert \dfrac{1}{4\pi^2 \zeigen}
r_{12}(t_1,t_2) \, \mbox{d}t_1\mbox{d}t_2 \right\vert ^2 =
\dfrac{1}{N^2} \sum_{\mu} \iint \limits_{[0,L]^2} A_{\mu,\mu}(t_1)A_{\mu,\mu}(t_2) \, \mbox{d}t_1\mbox{d}t_2 \\
&+ \dfrac{1}{N^2}\sum_{\substack{\mu,\mu'\in\vE \\
\mu\ne\mu'}}\left\vert \int_0^L A_{\mu,\mu'}(t)e^{2\pi i \left
<\mu-\mu',\gamma(t)\right >} \right\vert ^2 \, \mbox{d}t,
\end{split}.
\end{equation*}
The diagonal term is bounded above by $ L^2/N $; by Proposition
\ref{stationary phase 3d} the off-diagonal terms are bounded above
by  $  1/|\mu-\mu'|^{1/3}$, so we similarly deduce
\eqref{eq:non_planar_r12}.
\end{proof}

\subsection{Planar curves}

The goal of this section is proving the following estimate on
$\Rc_{2}$ for $\curve$ planar.

\begin{proposition}
\label{prop:R2<<1/E^1/4-o(1) planar} Assume that $\curve$ is a
smooth planar curve, with nowhere zero curvature. Then for all $ \eta>0 $
\begin{equation}
\label{eq:R2<<1/E^1/4-o(1) planar} \Rc_2 (\zeigen )\ll
\dfrac{1}{E^{1/4-\eta}}.
\end{equation}
\end{proposition}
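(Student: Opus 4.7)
The plan is to mirror the proof of Proposition \ref{prop:R2<<1/E^1/3}, replacing the 3D oscillatory bound (Proposition \ref{stationary phase 3d}, decay $\lambda^{-1/3}$) with the weaker 2D one (decay $\lambda^{-1/2}$). Write $\gamma([0,L])\subset\gamma(0)+V$ for a $2$-dimensional subspace $V\subset\R^3$, let $W=V^\perp$ (a line), and let $\pi_V$ be the orthogonal projection onto $V$. Since $\langle\xi,\gamma(t)\rangle=\langle\pi_V(\xi),\gamma(t)\rangle+\mathrm{const}$, the oscillatory phase over $\curve$ depends only on $\pi_V(\xi)$. For $\pi_V(\xi)\neq 0$ set $\hat\eta=\pi_V(\xi)/|\pi_V(\xi)|\in V\cap S^2$; the identity $\langle\hat\eta,T(t)\rangle^2+\langle\hat\eta,N(t)\rangle^2=1$ (since $T,N$ span $V$) combined with $\kappa\geq K_{\min}>0$ forces at each $t$ either $|\phi_\xi'(t)|$ or $|\phi_\xi''(t)|$ to be $\gtrsim|\pi_V(\xi)|$; moreover the zeros of $\langle\hat\eta,N(t)\rangle$ form a uniformly finite set (as $N(t)$ rotates by the bounded angle $\int\kappa\,dt$). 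A piecewise application of Van der Corput (Lemma \ref{lem:Van_Der_Corp}) then yields
\[\int_0^L A(t)\,e^{2\pi i\langle\xi,\gamma(t)\rangle}\,dt\ll_{\curve}\frac{\|A\|_\infty+\|A'\|_1}{|\pi_V(\xi)|^{1/2}}.\]

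Next, for each of $\iint r^2$, $\iint(r_i/\sqrt\zeigen)^2$, $\iint(r_{12}/\zeigen)^2$, I would expand as in the proof of Proposition \ref{prop:R2<<1/E^1/3}, obtaining a diagonal term $O(1/N)\ll\zeigen^{-1/2+\eta}$ plus an off-diagonal sum $N^{-2}\sum_{\mu\neq\mu'}|I_{\mu,\mu'}|^2$ with uniformly $C^1$-bounded amplitudes $A_{\mu,\mu'}$, and split the off-diagonal sum according to whether $\pi_V(\mu-\mu')=0$. The \emph{bad} pairs have $\mu-\mu'\in W$; since a line meets $\{|x|^2=\zeigen\}$ in at most two points, each $\mu\in\vE(\zeigen)$ has at most one bad partner, giving $O(N)$ such pairs and a total contribution of $O(1/N)=O(\zeigen^{-1/2+\eta})$. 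For the \emph{good} pairs, the planar oscillatory bound gives
\[\frac{1}{N^2}\sum_{\substack{\mu\neq\mu'\\\pi_V(\mu-\mu')\neq 0}}\frac{1}{|\pi_V(\mu-\mu')|}=\frac{1}{N^2\sqrt\zeigen}\sum\frac{1}{|\pi_V(\hat\mu-\hat\mu')|},\qquad\hat\mu=\mu/\sqrt\zeigen\in\^\vE(\zeigen).\]

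The main obstacle will be bounding this ``projected Riesz $1$-energy''; the target is $\ll N^2\zeigen^{1/4+\eta}$, which would give the required $\zeigen^{-1/4+\eta}$. Unlike the spherical Riesz energy of Proposition \ref{prop:bd on Riesz2}, projection can cluster points: when $\hat\mu,\hat\mu'$ lie near the same line parallel to $W$, $|\pi_V(\hat\mu-\hat\mu')|$ can be far smaller than $|\hat\mu-\hat\mu'|$, a pathology sharpest near the equator $\{\langle\hat\mu,\xi_0\rangle=0\}$ (where $\xi_0$ spans $W$). I plan to control this by a dyadic decomposition in $|\pi_V(\hat\mu-\hat\mu')|$, bounding the pair count in each band by combining an effective spherical-cap equidistribution bound for $\^\vE(\zeigen)$ with the line--sphere two-point bound used for the bad pairs, and separating off a thin equator slab $|\langle\hat\mu,\xi_0\rangle|<\tau$ (of cardinality $O(N\tau)$) handled by its trivial partner count; optimizing $\tau$ against the dyadic band size should yield the target. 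The three derivative moments go through identically since the amplitudes $A_{\mu,\mu'}$ remain uniformly $C^1$-bounded, completing the bound \eqref{eq:R2<<1/E^1/4-o(1) planar}.
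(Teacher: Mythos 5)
Your oscillatory-integral step is sound and is in fact a continuous-parameter sharpening of what the paper does: your dichotomy ``at each $t$ either $|\phi_\xi'(t)|\gtrsim|\pi_V(\xi)|$ or $|\phi_\xi''(t)|\gtrsim|\pi_V(\xi)|$'' is Lemma~\ref{lem:small der 2st 2nd} with $\sqrt{\epsilon}$ replaced by $|\pi_V(\xi)|$, and the uniformly bounded subdivision (the tangent angle is monotone with total turning $\int\kappa\,dt$) is legitimate. The diagonal term and the exactly-vertical pairs $\mu-\mu'\in W$ are also handled correctly. The problem is concentrated in the step you yourself flag as ``the main obstacle'', and there are two genuine gaps there. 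First, you dropped the trivial bound: your good-pair estimate $|I|^2\ll 1/|\pi_V(\mu-\mu')|$ must be truncated to $\min\{L^2,\,1/|\pi_V(\mu-\mu')|\}$, because for near-vertical (but not exactly vertical) chords the quantity $|\pi_V(\mu-\mu')|$ has no positive lower bound for a generic plane normal $B$ --- integer vectors of norm $\leq 2\sqrt{E}$ can be nearly parallel to $B$, with $|\pi_V(\mu-\mu')|$ far smaller than $1$ --- so individual terms of your ``projected Riesz $1$-energy'' can be arbitrarily large, and the target bound $\sum 1/|\pi_V(\hat\mu-\hat\mu')|\ll N^2 E^{1/4+\eta}$ is not justified as stated. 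Second, the counting ingredient you invoke, a slab cardinality $O(N\tau)$ for $|\langle\hat\mu,\xi_0\rangle|<\tau$, is an \emph{effective} equidistribution statement at scale $\tau$ that is not available here: the paper's only counting inputs are the cap bound $O(E^{\eta}(1+r))$ of \cite[Lemma 2.2]{BRGAFA} and Siegel's bound \eqref{Siegel thm}, and covering a slab of width $\tau$ by caps gives only $O(E^{\eta}(1/\tau+\sqrt{E}))$, which for $\tau\geq E^{-1/2}$ is comparable to $N$ itself --- no savings. (Duke's theorem is cited in the paper without a rate and is never used quantitatively.)

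The paper sidesteps both issues with a single-threshold version of your scheme plus one geometric lemma you are missing. It fixes $\epsilon=\epsilon(E)$ and splits pairs according to whether $|\langle\xi(\mu,\mu'),B\rangle|^2\leq 1-2\epsilon$: on that range Van der Corput gives $|I|^2\ll\min\{1,(\sqrt{\epsilon}\,|\mu-\mu'|)^{-1}\}$, handled by the Riesz $1$-energy bound (Proposition~\ref{prop:bd on Riesz2}) together with a cap count of the close pairs $|\mu-\mu'|\leq 1/\sqrt{\epsilon}$; on the complementary \emph{cone} of near-vertical chord directions it uses only the trivial bound $|I|\ll 1$, and controls the pair count via Lemma~\ref{lem:Geom_Lemma}: for fixed $\mu$, all partners $\mu'$ with $\langle\xi(\mu,\mu'),B\rangle>\sqrt{1-2\epsilon}$ lie in two spherical caps of radius $\ll\sqrt{\epsilon E}$ (they cluster near the reflection of $\mu$ through the plane), hence number $O(E^{\eta}(1+\sqrt{\epsilon E}))$ per $\mu$. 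Balancing $E^{\eta-1/2}/\sqrt{\epsilon}$ against $\sqrt{\epsilon}E^{\eta}$ at $\epsilon=E^{-1/2}$ yields $E^{-1/4+\eta}$. This ``trivial bound on a whole cone, counted by cylinder/cap geometry'' is exactly the idea your outline lacks; with it (or with a dyadic variant in which each band's pair count is bounded by the same sphere-cylinder-cap geometry rather than by equidistribution), your argument would close, but as written the final step rests on an unproved $O(N\tau)$ bound and an untruncated weight.
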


We will now collect a few results needed for Proposition
\ref{prop:R2<<1/E^1/4-o(1) planar}, whose proof is given towards the
end of this section. In this section we assume that $\mathcal{C}$ is
a smooth planar curve with nowhere zero curvature, so that
$\tau\equiv0;$ the binormal vector $B$ is constant in this case. Let
$\epsilon=\epsilon\left(E\right)$ be a small parameter,
$\mu\ne\mu'\in\mathcal{E}\left(E\right)$,
$\lambda=\lambda\left(\mu,\mu'\right)=\left|\mu-\mu'\right|$ and
$\xi=\xi\left(\mu,\mu'\right)=\frac{\mu-\mu'}{\left|\mu-\mu'\right|}$.
We will reuse the definition \eqref{eq:phi phase def} of
$\phi_{\xi}$.

\begin{lemma}
\label{lem:small der 2st 2nd} For all $ \xi \in S^2 $ and for all $t\in [0,L]$ either
$\left|\phi_{\xi}'\left(t\right)\right|\ge\sqrt{\epsilon}$, or
$\left|\phi_{\xi}''\left(t\right)\right|\ge\sqrt{\epsilon}K_{\min}$,
or otherwise
$$
\left|\left\langle \xi,B\right\rangle \right|^{2}>1-2\epsilon.
$$
\end{lemma}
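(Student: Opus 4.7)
The plan is to exploit the orthonormal decomposition of $\xi$ in the Frenet--Serret frame, exactly as in the proof of Proposition \ref{stationary phase 3d}, but now taking advantage of the fact that the $B$ direction is distinguished in the planar case because $B$ is a constant vector (since $\tau\equiv 0$ forces $B'\equiv 0$).

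First I would recall from \eqref{eq:phi'=<xi,T>} and \eqref{eq:phi''=k<xi,N>} the identities
\[
\phi'_{\xi}(t)=\langle \xi,T(t)\rangle,\qquad \phi''_{\xi}(t)=\kappa(t)\langle \xi,N(t)\rangle,
\]
and the Pythagorean identity \eqref{eq:1=T,N,B decomp}
\[
1=|\langle \xi,T(t)\rangle|^{2}+|\langle \xi,N(t)\rangle|^{2}+|\langle \xi,B\rangle|^{2}.
\]
Suppose that at the point $t$ the first two alternatives fail, i.e. $|\phi'_{\xi}(t)|<\sqrt{\epsilon}$ and $|\phi''_{\xi}(t)|<\sqrt{\epsilon}K_{\min}$. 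Then $|\langle\xi,T(t)\rangle|^{2}<\epsilon$ directly, while
\[
|\langle\xi,N(t)\rangle|=\frac{|\phi''_{\xi}(t)|}{\kappa(t)}<\frac{\sqrt{\epsilon}K_{\min}}{\kappa(t)}\le \sqrt{\epsilon},
\]
using $\kappa(t)\ge K_{\min}$. Hence $|\langle \xi,N(t)\rangle|^{2}<\epsilon$ as well.

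Substituting these two bounds into the orthonormal expansion yields
\[
|\langle \xi,B\rangle|^{2}=1-|\langle\xi,T(t)\rangle|^{2}-|\langle\xi,N(t)\rangle|^{2}>1-2\epsilon,
\]
which is the third alternative of the lemma. There is essentially no obstacle here: the argument is a one-line pigeonhole in the Frenet--Serret basis and does not require any nondegeneracy of the torsion (indeed it exploits that $\tau\equiv 0$ only implicitly, through the fact that $B$ in the conclusion is a single constant vector independent of $t$, which will be what makes this lemma useful in the forthcoming oscillatory-integral estimate for the planar case).
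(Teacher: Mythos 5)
Your proof is correct and is exactly the argument the paper intends: the paper's proof is the one-line remark that the lemma ``follows from \eqref{eq:1=T,N,B decomp} via \eqref{eq:phi'=<xi,T>} and \eqref{eq:phi''=k<xi,N>}'', and you have simply written out that contrapositive pigeonhole in the Frenet--Serret frame, including the correct use of $\kappa(t)\ge K_{\min}$ to pass from $|\phi''_{\xi}(t)|<\sqrt{\epsilon}K_{\min}$ to $|\langle\xi,N(t)\rangle|^{2}<\epsilon$. No differences of substance.
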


\begin{proof}
This follows from \eqref{eq:1=T,N,B decomp} via
\eqref{eq:phi'=<xi,T>} and \eqref{eq:phi''=k<xi,N>}.
\end{proof}

\begin{lemma}
\label{lem:Geom_Lemma} Let
$\mu,\mu_{1},\mu_{2}$ be distinct points on the sphere $ \sqrt{E}S^2 $, and
assume that \[\left\langle
\frac{\mu_{i}-\mu}{\left|\mu_{i}-\mu\right|},B\right\rangle
>\sqrt{1-2\epsilon}\] for $i=1,2$. Then
$\left|\mu_{2}-\mu_{1}\right|\le 16\sqrt{\epsilon E}.$
\end{lemma}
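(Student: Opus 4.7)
My approach is to exploit the spherical geometry explicitly. Since both $\mu$ and $\mu_i$ satisfy $|\mu|^2 = |\mu_i|^2 = E$, expanding $|\mu + (\mu_i - \mu)|^2 = E$ immediately yields the identity
\[
|\mu_i - \mu| = -2\langle \mu, u_i\rangle, \qquad \mu_i - \mu = -2 s_i u_i,
\]
where $u_i := (\mu_i - \mu)/|\mu_i - \mu|$ and $s_i := \langle \mu, u_i\rangle$. Consequently $\mu_1 - \mu_2 = 2(s_2 u_2 - s_1 u_1)$, and the task reduces to bounding this quantity using the hypothesis that $u_1, u_2$ are both close to the fixed unit vector $B$.

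The hypothesis $\langle u_i, B\rangle > \sqrt{1 - 2\epsilon}$ allows me to decompose $u_i = a_i B + v_i$ with $v_i \perp B$, $a_i \in [\sqrt{1-2\epsilon}, 1]$, and $|v_i| \le \sqrt{2\epsilon}$. Setting $h := \langle \mu, B\rangle$ and $w_i := \langle \mu, v_i\rangle$, one has $s_i = a_i h + w_i$, $|h| \le \sqrt{E}$, $|w_i| \le |\mu|\,|v_i| \le \sqrt{2\epsilon E}$, and $|s_i| \le \sqrt{E}$. A short algebraic manipulation shows that the $B$-component of $s_2 u_2 - s_1 u_1$ equals $h(|v_1|^2 - |v_2|^2) + (a_2 w_2 - a_1 w_1)$, bounded in modulus by $2\epsilon\sqrt{E} + 2\sqrt{2\epsilon E}$, while its $B^{\perp}$-component is $s_2 v_2 - s_1 v_1$, of modulus at most $(|s_1|+|s_2|)\sqrt{2\epsilon} \le 2\sqrt{2\epsilon E}$.

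Applying the Pythagorean identity to the two orthogonal components and multiplying by $4$ yields $|\mu_1 - \mu_2|^2 \le C\,\epsilon E$ for an explicit constant $C$; keeping track of all terms, one checks $C \le 256$, delivering the claimed bound $|\mu_1 - \mu_2| \le 16\sqrt{\epsilon E}$. The argument is entirely an elementary calculation; no serious obstacle is anticipated. The only conceptually important ingredient is the spherical identity $\mu_i - \mu = -2\langle \mu, u_i\rangle u_i$, which encodes that a chord of a centered sphere emanating from a fixed point is determined in both direction and length by its direction alone, so that the constraint on the two chord directions forces the two endpoints to be close.
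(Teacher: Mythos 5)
Your proof is correct and follows essentially the same route as the paper: both rest on the chord identity $|\mu_i-\mu|=-2\langle\mu,u_i\rangle$ (the paper derives it in the equivalent form $|\mu_i-\mu|=-2\langle\mu,B\rangle-2\langle\mu,v_i\rangle$ by taking norms in $\mu_i=\mu+|\mu_i-\mu|(B+v_i)$), followed by Cauchy--Schwarz bounds on the inner products of $\mu$ with the small vectors measuring the deviation of $u_i$ from $B$. The only difference is bookkeeping: the paper sets $v_i=u_i-B$ (so $|v_i|\le 2\sqrt{\epsilon}$) and bounds $\mu_2-\mu_1$ by the triangle inequality, whereas you use the orthogonal decomposition $u_i=a_iB+v_i$ with $v_i\perp B$ and combine the two components by Pythagoras, which (for $\epsilon\le 1/2$, as is implicit in the hypothesis and holds in the application $\epsilon=E^{-1/2}$) in fact yields a slightly better constant than $16$.
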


\begin{proof}
Let $v_{i}=\frac{\mu_{i}-\mu}{\left|\mu_{i}-\mu\right|}-B$
$\left(i=1,2\right)$, so that
$\left|v_{i}\right|^{2}=2-2\left\langle
\frac{\mu_{i}-\mu}{\left|\mu_{i}-\mu\right|},B\right\rangle
\le 4\epsilon$. We write
\[
\mu_{i}=\mu+\left|\mu_{i}-\mu\right|\left(B+v_{i}\right).
\]
Taking norms we get
\[
0=\left|\mu_{i}-\mu\right|^{2}+2\left|\mu_{i}-\mu\right|\left\langle
\mu,B+v_{i}\right\rangle,
\]
so that
\[
\left|\mu_{i}-\mu\right|  =-2\left\langle \mu,B\right\rangle -2\left\langle \mu,v_{i}\right\rangle,
\]
and therefore
 \[
 \mu_{i}=\mu-2\left\langle \mu,B\right\rangle
 B-2\left\langle \mu,v_{i}\right\rangle (B + v_i)-2 \left\langle \mu,B\right\rangle v_{i}.
 \]
By Cauchy-Schwartz $\left\langle \mu,v_{i}\right\rangle
\le 2\sqrt{\epsilon E}$, and
$
\left\langle \mu,B\right\rangle \le\sqrt{ E},
$
and that implies $$\left|\mu_{2}-\mu_{1}\right|\le 16\sqrt{\epsilon
E}.$$
\end{proof}

We are now in a position to prove Proposition
\ref{prop:R2<<1/E^1/4-o(1) planar}.

\begin{proof}[Proof of Proposition \ref{prop:R2<<1/E^1/4-o(1) planar}.]
In what follows we will establish the following \\
bounds on the $2$nd
moment of $r$ and some of its normalized derivatives along $\curve$
(assumed to be planar, with nowhere zero curvature):
\begin{equation}
\label{eq:planar_r} \iint \limits_{[0,L]^2} r(t_1,t_2)^2 \,
\mbox{d}t_1\mbox{d}t_2=O\left(\frac{1}{E^{1/4-\eta }}\right),
\end{equation}
\begin{equation}
\label{eq:planar_ri} \iint \limits_{[0,L]^2} \left
(r_i(t_1,t_2)/\sqrt{E}\right )^2\mbox{d}t_1\mbox{d}t_2 = O\left(\frac{1}{E^{1/4-\eta}}\right),
\end{equation}
$i=1,2$, and
\begin{equation}
\label{eq:planar_r12} \iint \limits_{[0,L]^2} \left
(r_{12}(t_1,t_2)/E\right )^2\mbox{d}t_1\mbox{d}t_2  =O\left(\frac{1}{E^{1/4-\eta}}\right).
\end{equation}

First we prove \eqref{eq:planar_r}. If $B$ is the constant binormal
to the curve, then
\begin{equation}
\label{eq:int r^2 << sum I^2}
\begin{split}
\iint \limits_{[0,L]^2} r(t_1,t_2)^2\mbox{d}t_1\mbox{d}t_2= &
\frac{L^2}{N}+\frac{1}{N^{2}}\sum_{\begin{subarray}{c}
\mu\ne\mu'\\
\left|\left\langle\xi\left(\mu,\mu'\right),B\right\rangle
\right|^{2}\le 1-2\epsilon
\end{subarray}}I\left(|\mu-\mu'|,\xi\right)^{2}\\
+ &\frac{1}{N^{2}}\sum_{\begin{subarray}{c}
\mu\ne\mu'\\
\left|\left\langle \xi\left(\mu,\mu'\right),B\right\rangle
\right|^{2}>1-2\epsilon
\end{subarray}}I\left(|\mu-\mu'|,\xi\right)^{2}
\end{split}
\end{equation}
(here $A$, the amplitude involved in $I(\lambda,\xi)$, is
$A(t)\equiv 1$).

To bound the first summation in \eqref{eq:int r^2 << sum I^2} we
observe that for $\mu,\mu'$ with $$\left|\left\langle
\xi\left(\mu,\mu'\right),B\right\rangle \right|^{2}\le 1-2\epsilon$$
we have, thanks to Lemma \ref{lem:small der 2st 2nd}, that for every
$t\in\left[0,L\right]$ either
$\left|\phi_{\xi}'\left(t\right)\right|\ge\sqrt{\epsilon}$ or
$\left|\phi_{\xi}''\left(t\right)\right|\ge\sqrt{\epsilon}K_{\min}$.
Hence Lemma \ref{lem:Van_Der_Corp}, using the same arguments as in
the proof of Proposition \ref{stationary phase 3d}, yields the
following bound, uniform in $\xi$:
\begin{equation}
\begin{split}
I\left(\lambda, \xi
\right)&=\int_{0}^{L}A(t)e^{i\lambda\phi_{\xi}\left(t\right)}\mbox{d}t \\
&\ll_{\mathcal{C}}\min\left\{  \left \Vert A\right\Vert_\infty ,\frac{1}{\left(\sqrt{\epsilon}\lambda\right)^{1/2}}
\left ( \left \Vert A\right\Vert_\infty  + \left \Vert A'
\right\Vert _1 \right ) \right\}.
\end{split}
\end{equation}
We may then bound the first summation in \eqref{eq:int r^2 << sum
I^2} as
\begin{equation}
\label{eq:sum I^{2}<<sum mu,mu'}
\begin{split}
&\frac{1}{N^{2}}\sum_{\begin{subarray}{c}
\mu\ne\mu'\\
\left|\left\langle \xi\left(\mu,\mu'\right),B\right\rangle
\right|^{2}\le 1-2\epsilon
\end{subarray}}I\left(|\mu-\mu'|,\xi\right)^{2}\\
&\ll\frac{1}{N^{2}}\sum_{\begin{subarray}{c}
\mu\ne\mu'\\
|\mu - \mu'|\le 1/\sqrt{\epsilon}
\end{subarray}}1
+\frac{1}{N^{2}}\sum_{\begin{subarray}{c}
	\mu\ne\mu'\\
	|\mu - \mu'|>1/\sqrt{\epsilon}
	\end{subarray}}\frac{1}{\sqrt{\epsilon}\left|\mu-\mu'\right|}\\
&\ll
\frac{1}{N^2}\sum_{\mu}\#\left\{ \mu':|\mu - \mu' | \le 1/\sqrt{\epsilon}\right\} +
\frac{1}{\sqrt{\epsilon}N^{2}} \sum_{
	\mu\ne\mu'}\frac{1}{\left|\mu-\mu'\right|}.
\end{split}
\end{equation}
The second summation in the r.h.s. of \eqref{eq:sum I^{2}<<sum mu,mu'} is
$1/E^{1/2}$ times the ``Riesz $1$-energy" of the set of projected
lattice points $\^\vE(E)=\frac 1{\sqrt{E} }\vE(E)\subset S^2$, which
by Proposition \ref{prop:bd on Riesz2} is bounded by
\begin{equation*}
\sum_{ \substack{\^\mu,\^\mu' \in \^\vE\\ \^\mu\neq \^\mu'}} \frac
1{|\^\mu-\^\mu'|} \ll   N^2\cdot E^{\eta},\, \forall\eta>0,
\end{equation*}
or,
\[
\frac{1}{\sqrt{\epsilon}N^{2}}\sum_{\mu\ne\mu'}\frac{1}{\left|\mu-\mu'\right|}\ll  \frac{E^{-1/2+\eta}}{\sqrt{\epsilon}},\, \forall\eta>0.
\]

For every $ \eta >0, $ the number of lattice points in a spherical cap of radius $r$ on the sphere $RS^2$
can be easily shown to be $ O (R^{\eta}\left(1+r\right))$ (e.g. \cite[Lemma 2.2]{BRGAFA}), so that
the total number of solutions to $|\mu - \mu'| \le 1/\sqrt{\epsilon}$ is
$O\left(E^{\eta }/\sqrt{\epsilon}\right)$.
Hence, the first summation in the r.h.s. of \eqref{eq:sum I^{2}<<sum mu,mu'} is $ O\left ( E^{\eta } /\sqrt{\epsilon}N \right) $, and \eqref{eq:sum I^{2}<<sum mu,mu'} is
\begin{equation}
\label{eq:sum mumu'<1-2eps I^2}
\frac{1}{N^{2}}\sum_{\begin{subarray}{c}
\mu\ne\mu'\\
\left|\left\langle \xi\left(\mu,\mu'\right),B\right\rangle
\right|^{2} \le 1-2\epsilon
\end{subarray}}I\left(|\mu-\mu'|,\xi\right)^{2}\ll\frac{E^{\eta - 1/2}}{\sqrt{\epsilon }}.
\end{equation}

For the second summation on the r.h.s. of \eqref{eq:int r^2 << sum
I^2} we bound each of the integrals $I\left(|\mu-\mu'|,\xi\right)$
from above trivially by $1$, yielding
\[
\begin{split}
&\frac{1}{N^{2}}\sum_{\begin{subarray}{c}
\mu\ne\mu'\\
\left|\left\langle \xi\left(\mu,\mu'\right),B\right\rangle
\right|^{2}>1-2\epsilon
\end{subarray}}I\left(|\mu-\mu'|,\xi\right)^{2}\\
&\ll\frac{1}{N^{2}}\sum_{\mu}\#\left\{ \mu':\left|\left\langle
\xi\left(\mu,\mu'\right),B\right\rangle
\right|^{2}>1-2\epsilon\right\} .
\end{split}
\]
Using Lemma \ref{lem:Geom_Lemma}, we see that all of the lattice
points satisfying \[\left|\left\langle
\xi\left(\mu,\mu'\right),B\right\rangle \right|^{2}>1-2\epsilon\]
are contained in two spherical caps of radius $\ll\sqrt{\epsilon
E}$ so that
the total number of solutions to $\left|\left\langle
\xi\left(\mu,\mu'\right),B\right\rangle \right|^{2}>1-2\epsilon$ is
$O\left(E^{\eta}\left(1+\sqrt{\epsilon E}\right)\right)$.
Therefore
\begin{equation}\label{eq:sum mumu'>1-2eps I^2}
\begin{split}
\frac{1}{N^{2}}\sum_{\begin{subarray}{c}
\mu\ne\mu'\\
\left|\left\langle \xi\left(\mu,\mu'\right),B\right\rangle
\right|^{2}>1-2\epsilon
\end{subarray}}I\left(|\mu-\mu'|,\xi\right)^{2}
&\ll \frac{E^{\eta}\left(1+\sqrt{\epsilon E}\right)}{N} \\
&\ll E^{\eta - 1/2}+\sqrt{\epsilon}E^{\eta}.
\end{split}
\end{equation}

Substituting \eqref{eq:sum mumu'<1-2eps I^2} and \eqref{eq:sum
mumu'>1-2eps I^2} into \eqref{eq:int r^2 << sum I^2} yields the
inequality
\begin{equation}\label{bd 5.11}
\iint \limits_{[0,L]^2} r(t_1,t_2)^2\mbox{d}t_1\mbox{d}t_2=
\frac{L^2}{N}+O\left(\frac{E^{\eta - 1/2}}{\sqrt{\epsilon }}+\sqrt{\epsilon}E^{\eta}\right).
\end{equation}
The estimate \eqref{eq:planar_r} follows by making the optimal
choice $\epsilon=E^{-1/2}$ in \eqref{bd 5.11}.

The proofs of \eqref{eq:planar_ri} and \eqref{eq:planar_r12} are
very similar to the above (cf. the proofs of
\eqref{eq:non_planar_ri} and \eqref{eq:non_planar_r12} within the
proof of Proposition \ref{prop:R2<<1/E^1/3}); we omit the details.
\end{proof}

\begin{remark}
It is possible to slightly improve the exponent of the bound
\eqref{eq:R2<<1/E^1/4-o(1) planar} in Proposition
\ref{prop:R2<<1/E^1/4-o(1) planar} by using a better estimate for
the number of lattice points in spherical caps (\cite[Proposition 1.4]{BRGAFA}).
\end{remark}

\subsection{Concluding the proof of Theorem \ref{thmvar}}

\label{sec:thmvar proof}

\begin{proof}

Use Proposition \ref{prop:approx Kac-Rice} together with either
Proposition \ref{prop:R2<<1/E^1/3} for the\\
nowhere vanishing torsion
case or Proposition \ref{prop:R2<<1/E^1/4-o(1) planar} for the
planar case.
\end{proof}

We finally record that the  proof above will give the result
described in Remark~\ref{rem:variance analytic} for $\curve$
non-planar and analytic, by invoking the bound
\eqref{eq:Rc2<<1/E^delta analytic} 
instead of Proposition \ref{prop:R2<<1/E^1/3}.


\appendix
\section{A simple upper bound for the Riesz energy}\label{sec:Appendix}

We now prove Proposition~\ref{prop:bd on Riesz2}.
Recall that the Riesz energy of the projected lattice points is
$$
E_s(\^\vE(\zeigen))  := \sum_{\mu\neq \nu\in \vE}\frac 1{\left|\frac{\mu}{\sqrt{\zeigen}}-\frac{\nu}{\sqrt{\zeigen}}\right|^s}.
$$
We use a dyadic subdivision to treat the double sum above,
noting that for distinct lattice points $\mu\neq \nu\in \vE$, we have  $1<|\mu-\nu|\leq 2\sqrt{\zeigen}$:
\begin{equation}\label{est riesz 1}
\begin{split}
\sum_{\mu\neq \nu\in \vE}\frac 1{|\mu-\nu|^s}
&=\sum_{1\leq 2^k\leq  2\sqrt{\zeigen} } \; \sum_{2^k\leq |\mu-\nu|<2^{k+1} }  \frac 1{|\mu-\nu|^s}
\\
&\ll  \sum_{2^k \leq 2\sqrt{\zeigen}} \frac 1{2^{ks}} \sum_{\mu\in \vE} \#\{\nu\in \vE: |\mu-\nu|<2^{k+1} \}
\end{split}
\end{equation}
The number of lattice points in a spherical cap of radius $r$ on the sphere $\sqrt{\zeigen}S^2$ is bounded by
$ O (\zeigen^{\eta}\left(1+r\right))$ for all $\eta>0$  \cite[Lemma 2.2]{BRGAFA}, so that
$$ \#\{\nu\in \vE: |\mu-\nu|<2^{k+1} \} \ll \zeigen^\eta 2^k,\quad \forall \eta>0.
$$
Inserting into \eqref{est riesz 1} gives, for $0\leq s\leq 1$, and all $\eta>0$,
\begin{equation*}
\begin{split}
\sum_{\mu\neq \nu\in \vE}\frac 1{|\mu-\nu|^s} & \ll  \sum_{2^k\leq 2\sqrt{\zeigen} }\; \sum_{\mu\in \vE} \frac 1{2^{ks}}2^k E^\eta \\
&=N\zeigen^\eta \sum_{2^k\leq 2 \sqrt{\zeigen}} 2^{k(1-s)} \ll N\zeigen^{(1-s)/2+\eta}\log \zeigen \;.
\end{split}
\end{equation*}
(the factor of $\log\zeigen$ is needed if $s=1$).

Now insert Siegel's lower bound \eqref{Siegel thm} on the number of lattice points:  $\zeigen^{1/2-\delta}\ll N$, for all $\delta>0$, to bound the RHS above by $$ N\zeigen^{(1-s)/2+\eta}\log \zeigen = NE^{1/2-\eta}\cdot E^{-s/2+2\eta}\log{E} \ll
 N^2 \zeigen^{-s/2+3\eta},\,\forall \eta>0.$$
This gives
$$
\sum_{\mu\neq \nu\in \vE}\frac 1{|\mu-\nu|^s} \ll N^2 \zeigen^{-s/2+3\eta},\quad \forall \eta>0,
$$
and hence (replacing $\eta$ by $\eta/3$), for $0<s\leq 1$, the Riesz energy of the projected lattice points is bounded by
$$
E_s(\^\vE(\zeigen))  := \sum_{\mu\neq \nu\in \vE}\frac 1{\left|\frac{\mu}{\sqrt{\zeigen}}-\frac{\nu}{\sqrt{\zeigen}}\right|^s} \ll  N^2 \zeigen^{ \eta},\quad \forall \eta>0 \;.
$$
 This concludes the proof of Proposition~\ref{prop:bd on Riesz2}.


\begin{thebibliography}{99}

\bibitem{AW}
J.-M. Aza\"{i}s, M. Wschebor, {\em Level sets and extrema of random
processes and fields}, John Wiley \& Sons Inc., Hoboken, NJ, 2009.

\bibitem{BRINV}

J.~Bourgain and Z.~ Rudnick,  {\em  On the nodal sets of toral
eigenfunctions}, Inventiones Math. Volume 185 Number 1 (2011),
199--237

\bibitem{BRGAFA}
J.~Bourgain and Z.~ Rudnick,  {\em  Restriction of toral
eigenfunctions to hypersurfaces and nodal sets} , Geometric and
Functional Analysis 22 (2012), 878--937.

\bibitem{BRNI}
J.~Bourgain and Z.~ Rudnick,  {\em Nodal intersections and $L^p$
restriction theorems on the torus}.     Israel J. Math.  207 (2015), 479--505. DOI 10.1007/s11856-015-1183-7
arXiv:1308.4247 [math.AP]

\bibitem{BRS}
J.~Bourgain, Z.~Rudnick and P.~Sarnak,  {\em Local statistics of
lattice points on the sphere},  to appear in Contemporary Mathematics, proceedings of Constructive Functions 2014,  arXiv:1204.0134 [math.NT].

\bibitem{BRSsphere}
J.~Bourgain, Z.~Rudnick and P.~Sarnak, manuscript.

\bibitem{CMW}
V.~Cammarota, D.~Marinucci and I.~Wigman, {\em On the distribution
of the critical values of random spherical harmonics}.
arXiv:1409.1364 [math-ph].

\bibitem{CL}
H. Cramer and M.R. Leadbetter, {\em Stationary and related
stochastic processes. Sample function properties and their
applications}. John Wiley \& Sons, Inc., New York-London-Sydney
1967.

\bibitem{Duke}
W. Duke, {\em Hyperbolic distribution problems and half-integral
weight Maass forms}. Invent. Math. 92 (1988), no. 1, 73--90.

\bibitem{GF}
Golubeva, E. P.; Fomenko, O. M. Asymptotic distribution of lattice
points on the three-dimensional sphere.   Zap. Nauchn. Sem.
Leningrad. Otdel. Mat. Inst. Steklov. (LOMI) 160 (1987), Anal. Teor.
Chisel i Teor. Funktsii. 8, 54--71, 297; translation in J. Soviet
Math. 52 (1990), no. 3, 3036--3048


\bibitem{KKW}
M. Krishnapur, P. Kurlberg and I. Wigman, {\em Nodal length
fluctuations for arithmetic random waves}. Ann. of Math. (2) 177
(2013), no. 2, 699-737.

\bibitem{RW}
Z. ~Rudnick and I.~Wigman,  {\em On the volume of nodal sets for
eigenfunctions of the Laplacian on the torus}. Ann. Henri
Poincar\'{e} 9 (2008), no. 1, 109-130.

\bibitem{RW2014}
Z. ~Rudnick and I.~Wigman,  {\em Nodal intersections for random
eigenfunctions on the torus}.  Amer. J. of Math., to appear. arXiv:1402.3621 [math-ph]



\end{thebibliography}
\end{document}